\theoremstyle{plain}
\newtheorem{theorem}{Theorem}
\newtheorem{proposition}[theorem]{Proposition}
\newtheorem{corollary}[theorem]{Corollary}
\newtheorem{lemma}[theorem]{Lemma}
\newtheorem{proposition.definition}[theorem]{Proposition/Definition}
\newtheorem{theoremalpha}{Theorem}
\newtheorem{corollaryalpha}[theoremalpha]{Corollary}
\newtheorem{propositionalpha}[theoremalpha]{Proposition}
\newtheorem{conjecturealpha}[theoremalpha]{Conjecture}
\theoremstyle{definition}
\newtheorem{remark}[theorem]{Remark}
\newtheorem{example}[theorem]{Example}
\newtheorem{problem}[theorem]{Problem}
\newcommand{\lra}{\longrightarrow}
\newcommand{\noi}{\noindent}
\newcommand{\PP}{\mathbf{P}}
\newcommand{\RR}{\mathbf{R}}
\newcommand{\ZZ}{\mathbf{Z}}
\newcommand{\QQ}{\mathbf{Q}}
\newcommand{\OO}{\mathcal{O}}
\newcommand{\eps}{\varepsilon}
\newcommand{\rk} {\text{rank }}
\newcommand{\HH}[3]{H^{{#1}} \big( {#2} , {#3}
\big) }
\newcommand{\hh}[3]{h^{{#1}} \big( {#2} , {#3}
\big) }
\newcommand{\tn}[1]{\textnormal{#1}}
\newcommand{\vol}{\textnormal{vol}}
\newcommand{\eee}{\mathbf{e}}
\newcommand{\rX}{\mathrm{X}}
\newcommand{\rY}{\mathrm{Y}}
\newcommand{\rB}{\mathrm{B}}
\newcommand{\rZ}{\mathrm{Z}}
\newcommand{\Var}{\textnormal{Var}}
\numberwithin{theorem}{section}
\begin{document}

\title{Asymptotics of Random Betti Tables}

 \author{Lawrence Ein}
  \address{Department of Mathematics, University of Illinois at Chicago, 851 South Morgan St., Chicago, IL  60607}
 \email{{\tt ein@uic.edu}}
 \thanks{Research of the first author partially supported by NSF grant DMS-1001336.}
 
 \author{Daniel Erman}
  \address{Department of Mathematics, University of Michigan, Ann Arbor, MI
   48109}
   \email{{\tt erman@umich.edu}}
\thanks{Research of the second author partially supported by the Simons Foundation.}

 \author{Robert Lazarsfeld}
  \address{Department of Mathematics, University of Michigan, Ann Arbor, MI
   48109}
 \email{{\tt rlaz@umich.edu}}
 \thanks{Research of the third author partially supported by NSF grant DMS-1159553.}

\maketitle

\setlength{\parskip}{.20 in}

 \section*{Introduction}

 The purpose of this paper is twofold. First, we present a conjecture to the effect that the ranks of the syzygy modules of a smooth projective variety become normally distributed as the positivity of the embedding line bundle grows. Then, in an attempt to render the conjecture plausible, we  prove a result suggesting that this is in any event the typical behavior from a probabilistic  point of view. Specifically, we consider a  ``random" Betti table with a fixed number of rows, sampled according to a uniform choice of Boij-S\"oderberg coefficients. We compute the asymptotics of the entries as the length of the table goes to infinity,  and show that they  become normally distributed with high probability.  
  
 Turning to details, we start by discussing at some length the geometric questions underlying the present work. Let $X$ be a smooth projective variety of dimension $n$ defined over some field $k$, and for   $d > 0$ put \[   L_d \ = \ dA + P \]
 where $A$ is a fixed ample divisor and $P$ is an arbitrary divisor on $X$. We assume that $d$ is sufficiently large so that $L_d$ defines a normally generated embedding
 \[   X \, \subseteq \,   \PP^{r_d}, \ \ \text{where } \ r \, = \, r_d \, =_{\text{def}} \, h^0(X, L_d)-1\, = \,  O(d^n). \]
 Write $S = \text{Sym}\,\HH{0}{X}{L_d}$ for the homogeneous coordinate ring of $\PP^{r_d}$, denote by $J = J_X \subseteq S$ the homogeneous ideal of $X$, and consider the minimal graded free resolution $E_{\bullet} = E_{\bullet}(X; L_d)$ of $J$:
 \[
 \xymatrix{
0 &J_X \ar[l]& \oplus S(-a_{1,j}) \ar[l]  \ar@{=}[d]& \oplus S(-a_{2,j})  \ar[l] \ar@{=}[d]& \ar[l] \ldots & \ar[l] \oplus S(-a_{r,j}) \ar[l] \ar@{=}[d] &  \ar[l]0 . \\ 
&  & E_1 & E_2 & &E_r }
 \]
 As customary, it is convenient to define
 \[
K_{p,q} \big ( X; L_d \big) \ = \ \Big \{
\parbox{1.9in}{\begin{center} minimal generators of $E_p(X;L_d)$ of degree $p + q$\end{center}}
\Big  \}.  
\]
  Thus $K_{p,q}(X;L_d)$ is a finite-dimensional vector space, and
\[  E_p(X;L_d) \ = \  \underset{q}{\bigoplus}   \ K_{p,q}(X;L_d) \, \otimes _k \,  S(-p-q). \]
We refer to an element of $K_{p,q}$ as a $p^{\text{th}}$ syzygy of weight $q$. 
 The dimensions
 \[   k_{p,q}(X; L_d) \ =_{\text{def}} \dim \, K_{p,q}(X;L_d) \]
 are the \textit{Betti numbers} of $L_d$; they are the entries of the \textit{Betti table} of $L_d$.  The basic problem motivating the present paper (one that alas we do not solve) is to understand the asymptotic behavior of these numbers as $d \to \infty$.
 
 Elementary considerations of Castelnuovo-Mumford regularity show that if $d \gg 0$ then
 \[  K_{p,q}(X; L_d) \ = \ 0 \ \ \text{for }  \ q > n+1.\]
 Furthermore $K_{p,n+1} (X; L_d) \ne 0$  if and only if
 \[     r_d - n - (p_g - 1) \ \le p \le \ r_d - n, \] 
 where $p_g = h^0(X, \omega_X)$. So the essential  point is to understand the groups $K_{p,q}(K, L_d)$ for $1 \le q \le n$ and $p \in [1, r_d]$. The main result of \cite{ASAV} is that as $d \to \infty$ these groups become non-zero for ``essentially all" values of the parameters. Specifically, there exist constants $C_1, C_2 > 0$ (depending on $X$ and the choice of the  divisors $A, P$ appearing in the definition of $L_d$) such that if one fixes $1 \le q \le n$, then
 \[ K_{p,q}(X; L_d) \ \ne \ 0 \]
 for every value of $p$ satisfying 
 \[   C_1 \cdot d^{q-1} \ \le \ p \ \le \ r_d - C_2 \cdot d^{n-1}. \]
However the results of \cite{ASAV} do not say anything quantitative about  the asymptotics in $p$ of the corresponding Betti numbers $k_{p,q}(X; L_d)$ for fixed weight $q \in [1,n]$ and $d \gg 0$.

The question is already interesting in the case $n = 1$ of curves: here it is only  the  $k_{p,1}$ that come asymptotically into play.\footnote{If $X$ is a curve of genus $g >2$ and $L_d$ is a divisor of degree $d \gg 0$ on $X$,  then the Betti numbers $k_{p,1}(X; L_d)$ depend on the geometry of $X$ and $L_d$ --  in a manner that is not completely understood -- when $r_d - g \le p \le r_d-1$. However elementary estimates show that when $d$ is large the invariants in question are exponentially smaller than the $k_{p,1}$ for $p \approx \frac{r_d}{2}$, and so they do not show up in the asymptotic picture. See Remark \ref{Curve.Variability.Remark}.}   Figure 1 shows plots of these Betti numbers for a divisor of degree $75$ on a curve of genus $0$, and on a curve of genus $10$.      The figure suggests that the $k_{p,1}$ become normally distributed, and we prove that  this is indeed the case:     
\vskip 10 pt
\begin{figure} [th!]\label{Betti.Picture}
   \includegraphics[scale = 1]{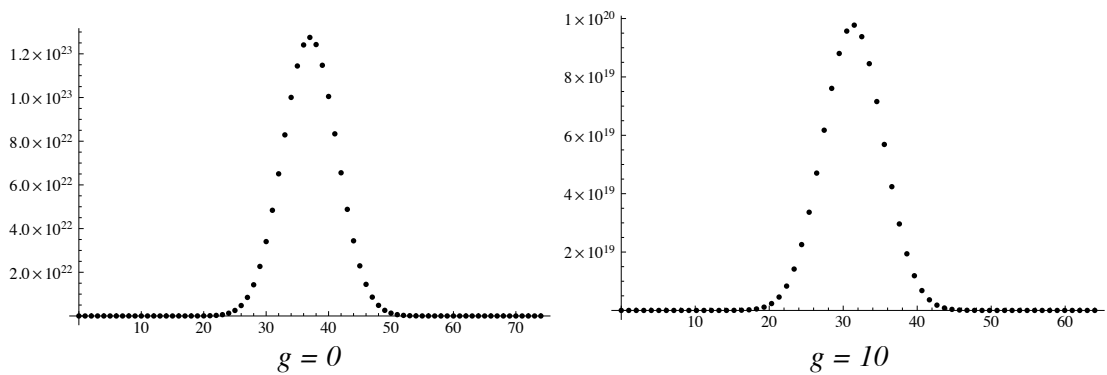}
   \vskip -10pt
   \caption{Betti numbers $k_{p,1}$ on curves of genus $0$ and $10$}
   \vskip 10pt
  \end{figure}

\begin{propositionalpha} \label{Betti.Asymptotics.Curves}
   Let $L_d$ be a divisor of large degree $d$ on a smooth projective curve $X$ of genus $g$, so that $r_d = d - g$. Choose a sequence $\{ p_d \}$  of integers such that 
\[ p_d \ \to\ \frac{r_d}{2} + a \cdot \frac{\sqrt{r_d}}{2}\]
for some fixed number $a$ \textnormal{(}ie.\ $\lim_{d \to \infty} \,  \frac{2p_d - r_d}{\sqrt{r_d}} = a$\textnormal{)}.  Then as $d \to \infty$,
\[   
\left ( \frac{1}{2^{r_d} }\cdot \sqrt{\frac{2\pi}{r_d}}\, \right ) \cdot k_{{p_d},1}(X;L_d)  \to e^{-a^2 / 2}. \]
   \end{propositionalpha}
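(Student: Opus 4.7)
My plan is to obtain an essentially exact formula for $k_{p,1}(X;L_d)$ and then apply the local central limit theorem (equivalently, Stirling's formula) to extract the Gaussian. Write $r = r_d = d-g$.

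First I would reduce $k_{p,1}$ to the Euler characteristic of a single Koszul strand. On a curve the Betti table is concentrated in the rows $q = 0, 1, 2$, and for $d \gg 0$ the embedding is projectively normal, forcing $K_{p+1,0}(X;L_d) = 0$ for $p \geq 0$. Moreover, the non-vanishing range quoted in the introduction gives $K_{p-1,2}(X;L_d) = 0$ whenever $p-1 < r-g$, which certainly holds for $p \leq r/2 + O(\sqrt{r})$ once $d$ is large. Consequently the Euler characteristic of the weight-$(p+1)$ strand of the Koszul complex on $H^0(L_d)$ collapses to a single Betti number, giving
\[
-k_{p,1}(X;L_d) \; = \; \sum_{i \geq 0}(-1)^i \binom{r+1}{p+1-i}\, h^0(X, L_d^i).
\]
Plugging in $h^0(\OO_X) = 1$ and $h^0(L_d^i) = id-g+1$ for $i \geq 1$ (Riemann-Roch, valid for $d$ large), and then collapsing the alternating sum via the identities $\sum_i (-1)^i \binom{N}{M-i} = \binom{N-1}{M}$ and $\sum_i (-1)^i i \binom{N}{M-i} = -\binom{N-2}{M-1}$, a short algebraic manipulation yields the clean formula
\[
k_{p,1}(X;L_d) \; = \; p\binom{r}{p+1} \; - \; g\binom{r-1}{p-1}
\]
for all $p$ in $[1, r-g]$.

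With this formula in hand, the asymptotic is a direct application of Stirling. For $p_d = r/2 + a\sqrt{r}/2 + o(\sqrt{r})$, the local de Moivre-Laplace theorem gives $\binom{r}{p_d+1} \sim 2^r \sqrt{2/(\pi r)}\, e^{-a^2/2}$, and combined with $p_d \sim r/2$ the leading term satisfies
\[
\frac{1}{2^r}\sqrt{\tfrac{2\pi}{r}} \cdot p_d \binom{r}{p_d+1} \; \longrightarrow \; e^{-a^2/2}.
\]
The genus correction $g\binom{r-1}{p_d-1}$ is of order $2^r/\sqrt{r}$, so after the normalization it contributes $O(1/r) \to 0$ and is absorbed into the error.

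No step presents a serious obstacle. The only point requiring attention is verifying that the closed form for $k_{p,1}$ holds on a range of $p$ large enough to contain $p_d$; this is automatic, since the upper bound $r-g$ eventually dwarfs $r/2 + O(\sqrt{r})$ as $d \to \infty$.
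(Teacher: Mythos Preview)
Your argument is correct, and the closed form $k_{p,1}=p\binom{r}{p+1}-g\binom{r-1}{p-1}$ checks out. The route, however, differs from the paper's. The paper computes $k_{p,1}$ via the kernel bundle $M_d=\ker\big(H^0(L_d)\otimes\OO_X\to L_d\big)$: one has $k_{p,1}=h^0(\Lambda^pM_d\otimes L_d)-\dim\Lambda^{p+1}H^0(L_d)$, and Green's vanishing $k_{p-1,2}=0$ for $p\le r_d-g$ lets one replace $h^0$ by $\chi$, which is then evaluated by Riemann--Roch to give
\[
k_{p,1}=\binom{r}{p}\Big(\tfrac{-pd}{r}+(r+1)-\tfrac{r+1}{p+1}\Big).
\]
This agrees with your formula after a short manipulation. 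You instead take the Euler characteristic of the degree-$(p+1)$ Koszul strand and collapse it against the Hilbert function $h^0(iL_d)=id-g+1$ using binomial identities, bypassing the kernel bundle entirely. Your approach is a bit more elementary and self-contained; the paper's fits into the kernel-bundle framework used elsewhere in \cite{ASAV} and makes the dependence on $\mu(M_d)$ transparent. The final step---Stirling/de~Moivre--Laplace applied to the dominant binomial, with the genus correction vanishing after normalization---is the same in both.
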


    At the risk of some recklessness, we conjecture that the picture seen in dimension one holds universally: 
        \begin{conjecturealpha} \label{Betti.Asymptotics.Conj}
  Let $X$ be a smooth projective variety of dimension $n$, and fix a weight $1 \le q \le n$. Then there is a normalizing function $F_q(d)$ $($depending on $X$ and geometric data$)$ such that\[
F_q(d)\cdot k_{p_d,q}(X;L_d) \lra e^{-a^2 / 2} 
\]
as $d \to \infty$ and $p_d \to  \frac{r_d}{2} \ + \ a \cdot \frac{\sqrt {r_d}}{2} $.
   \end{conjecturealpha}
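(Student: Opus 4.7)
The plan is to compute $k_{p_d,q}(X;L_d)$ via Koszul cohomology using the kernel bundle $M_{L_d}$ defined by
\[ 0 \lra M_{L_d} \lra \HH{0}{X}{L_d} \otimes \OO_X \lra L_d \lra 0, \]
then reduce to an Euler characteristic via vanishing, evaluate it by Hirzebruch--Riemann--Roch, and extract a Gaussian via Stirling's formula.  Recall that $K_{p,q}(X;L_d)$ is the middle cohomology of the three-term Koszul complex
\[ \wedge^{p+1}\HH{0}{X}{L_d} \otimes \HH{0}{X}{L_d^{q-1}} \lra \wedge^p \HH{0}{X}{L_d} \otimes \HH{0}{X}{L_d^q} \lra \wedge^{p-1}\HH{0}{X}{L_d}\otimes \HH{0}{X}{L_d^{q+1}}, \]
and when all higher cohomology of $\wedge^{p\pm 1} M_{L_d}\otimes L_d^{q\mp 1}$ and $\wedge^p M_{L_d}\otimes L_d^q$ vanishes, Koszul cohomology expresses $k_{p,q}(X;L_d)$ in terms of Euler characteristics $\chi(X,\wedge^{p} M_{L_d}\otimes L_d^{q})$ together with the binomial coefficients coming from the Koszul differentials.

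First I would establish the required higher cohomology vanishing throughout the bulk range $|p - r_d/2|=O(\sqrt{r_d})$.  Second, using the splitting principle to expand $\textnormal{ch}(\wedge^p M_{L_d})$ and then Hirzebruch--Riemann--Roch, I would obtain an expansion of the shape
\[ \chi\bigl(X,\wedge^p M_{L_d} \otimes L_d^q\bigr) \ = \ \binom{r_d}{p}\cdot P_q(d) \ + \ \sum_{i \ge 1} \binom{r_d - i}{p - i}\cdot P_{q,i}(d), \]
where $P_q(d)$ is a polynomial of degree $n$ in $d$ whose leading coefficient involves $A^n$ and $q$, and the correction polynomials $P_{q,i}(d)$ are of degree at most $n$ with coefficients built from intersection numbers of $A$, $P$, and characteristic classes of $X$.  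Third, for $p_d = r_d/2 + a\sqrt{r_d}/2$ one has $\binom{r_d}{p_d} \sim 2^{r_d}\sqrt{2/(\pi r_d)}\,e^{-a^2/2}$ by Stirling and $\binom{r_d-i}{p_d-i}/\binom{r_d}{p_d} \to 2^{-i}$; after tracking the cancellations among the three Koszul columns one obtains $F_q(d)\cdot k_{p_d,q}(X;L_d) \to e^{-a^2/2}$, where the normalizer $F_q(d)$ is proportional to $\sqrt{r_d}/(2^{r_d}\cdot d^n)$ with constant of proportionality determined by $X$, $A$, $P$, and $q$.  As a sanity check, specializing to $n = 1$, $q = 1$ recovers Proposition \ref{Betti.Asymptotics.Curves}.

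The hardest step by far is the first: proving $H^j(X,\wedge^{p+1} M_{L_d}\otimes L_d^{q-1}) = 0$ for $j\ge 1$ throughout the bulk range is an effective refinement of the asymptotic non-vanishing statements of \cite{ASAV}, and at present such vanishing is only accessible near the extremes of the $p$-range.  An alternative route would be to decompose $\beta(X;L_d)$ in the Boij--S\"oderberg basis as $\sum_\pi c_\pi(X;L_d)\cdot \pi$, use the paper's main theorem on random Betti tables to deduce a Gaussian shape for each individual pure summand, and then argue that the coefficients $c_\pi(X;L_d)$ concentrate tightly enough for the sum to retain its Gaussian profile; however, controlling these coefficients for an actual variety appears no easier than the direct vanishing approach, which is presumably why the statement is offered as a conjecture and why the paper's main theorem instead provides probabilistic evidence in the Boij--S\"oderberg model rather than a proof.
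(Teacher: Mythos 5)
This statement is offered in the paper as a \emph{conjecture}, not a theorem: the authors state explicitly that they do not know how to verify it for any variety of dimension $n \ge 2$, and the entire point of their random Betti table computation is to supply indirect, probabilistic evidence rather than a proof. So there is no ``paper's own proof'' to compare against, and your proposal cannot be assessed as a match or a variant; it has to be judged as a would-be proof of an open problem, and as such it has a real gap.

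The gap is your step one, and it is more serious than you acknowledge. For $n\ge 2$ it is not just that the needed vanishing $H^j(X,\wedge^{p}M_{L_d}\otimes L_d^{q})=0$ for $j\ge 1$ is hard to establish in the bulk range $|p-r_d/2|=O(\sqrt{r_d})$; the main theorem of \cite{ASAV} actually shows that for every weight $q'\in[1,n]$ the groups $K_{p,q'}(X;L_d)$ are simultaneously nonzero throughout that range. Those groups are themselves built out of cohomology of the bundles $\wedge^{p'}M_{L_d}\otimes L_d^{q'}$, which is strong evidence that the higher cohomology you need to kill does \emph{not} vanish in the bulk. And even granting some vanishing, an Euler characteristic $\chi(X,\wedge^{p}M_{L_d}\otimes L_d^{q})$ computed by Hirzebruch--Riemann--Roch only gives you an alternating sum involving the $k_{p,q'}$ for several weights $q'$ at once; since several of those are of comparable size in the bulk range, the Euler characteristic does not isolate the single row $k_{p,q}$ you want. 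In dimension one this difficulty disappears because only two weights $q=1,2$ occur and Green's theorem kills the $q=2$ row for $p\le r_d-g$, which is exactly why the argument succeeds for curves but does not extend. Your alternative Boij--S\"oderberg route has the same problem in disguise: controlling the coefficients $c_\pi(X;L_d)$ for an actual variety is precisely what Problem \ref{NIce.BS.Function.Question} in the paper asks for, and it is open. In short, your outline correctly identifies the shape a proof would have to take, and correctly flags step one as the obstruction, but as written it does not constitute a proof, because the cohomological input it requires is not available and may well be false.
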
 
   \noi In other words,  the prediction is that as $d \to \infty$ one gets the same sort of normal distribution of the Betti numbers $k_{p,q}(X;L_d)$, considered as functions of $p$, as that which occurs in the case of curves. Put another way, the conjecture asserts that the rows of the Betti table  of any very positive embedding display roughly the  pattern that  one would see in a large Koszul complex. 
          
             As an illustration, we plot in Figure \ref{4Fold.Veronese.P2} the Betti numbers $k_{p,1}$ for the 
 embedding $\PP^2 \subset \PP^{14}$ defined by $\OO_{\PP^2}(4)$, which is the largest example we were able to run on \texttt{Macaulay2}. We hope that the reader will agree that  these data  at least seem  consistent with the conjectured picture. 
 \begin{figure} 
 \vskip5pt
 \includegraphics[scale = .9]{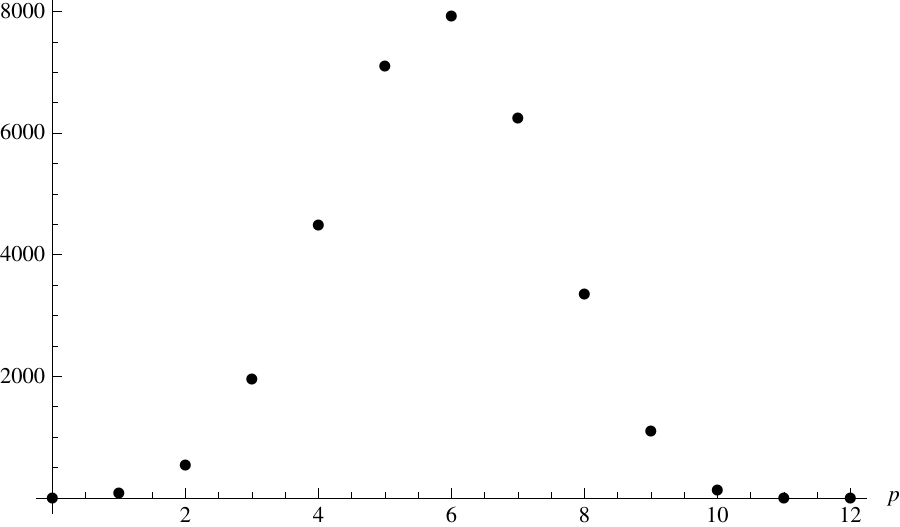}  
      \caption{Betti numbers $k_{p,1}$ of $4$-fold Veronese embedding of $\PP^2$.}
    \label{4Fold.Veronese.P2}
  \end{figure}

   Concerning the Conjecture, the first point to stress is that we don't know how to verify it for any variety of dimension $n \ge 2$. For example, it already seems a very interesting and  challenging problem to compute  the asymptotics of the Betti numbers of the $d$-fold Veronese embeddings of $\PP^2$. 
  In view of this state of affairs, it then becomes natural to ask whether one can give any indirect evidence supporting the conjecture. For example, if one considers  resolutions with syzygies having fixed weights $1 \le q \le n$ and lets the length   of the resolution grow, are the asymptotics predicted by the conjecture ``typical" in some sense? Our main purpose here is to prove a result suggesting  that this is indeed the case: with high probability, the entries in the rows of a ``random" Betti table become normally distributed as the length of the table goes to infinity.

  In order to make this  precise we start by introducing a  model for the resolutions in question;  we will  then apply the Boij-S\"oderberg theory established by Eisenbud and Schreyer \cite{ES} to construct a sample space for studying random Betti tables.  Specifically, fix a  natural number $n\ge 2$ and let  $R = k[x_1, \ldots, x_{r-n}]$ be a polynomial  ring in $r-n$ variables, where $r$ is a large integer that will later go to infinity. Consider now a finite-length graded $R$-module $M$ with the property that  
  \begin{equation}  \label{Bounding.Eqn} K_{p,q}(M) \ = \ 0 \ \ \text{for } q \not \in [1,n]    \end{equation}
and for every $0 \le p \le n-r$. We think of $M$ as having a resolution that models in a slightly simplified manner the resolutions that occur for very positive embeddings of smooth projective varieties of dimension $n$.   In the geometric setting, if $X$ carries a line bundle $B$ such that $H^i(X, B) = 0 $ for all $i$ -- for instance \ $B = \OO_{\PP^n}(-1)$ on $X  = \PP^n$ --  then  modules as in (*) can be constructed by starting with the graded $S$-module associated to $B$ and   modding out by $n+1$ general linear forms. 

  The theory of Eisenbud-Schreyer asserts that the Betti numbers $k_{p,q}(M)$ are determined by the Betti
   numbers of  modules having a \textit{pure resolution}. By definition, this is a module $\Pi$ with the property that the $p ^{\text{th}}$ syzygies of $\Pi$ all occur in a single weight $q = q(p)$. Pure modules satisfying \eqref{Bounding.Eqn} are then described by an $(n-1)$-element subset $I\subseteq [r] =_{\text{def}} \{1,\ldots, r\}$ encoding the  values of $p$ at which the weight $q(p)$ jumps: we denote by $\binom{[r]}{n-1}$ the collection of all such $I$. The corresponding module $\Pi_I$ is not unique, but its Betti numbers $k_{p,q}(\Pi_I)$ are  determined up to a scalar that is normalized by fixing the multiplity of  each $\Pi_I$. The main result of \cite{ES} implies that given $M$ as above, there exist non-negative rational numbers $x_I \in \QQ$ such that 
  \begin{equation}   \label{BS.Decomp.Eqn} k_{p,q}(M) \ = \ \sum_{I \in \binom{[r]}{n-1}} x_I \cdot k_{p,q}(\Pi_I)  \end{equation}
   for all $ p \in [0,r-n]$ and $q \in [1,n]$.\footnote{The set of indices $I$ that arise here form in a natural way the vertices of   a simplicial complex, and if one takes into account the simplicial structure one can arrange that the expression in \eqref{BS.Decomp.Eqn} is unique. However for reasons that we will discuss in Remark \ref{Ignore.Simplex.Remark}, we prefer to allow the indices $I$ to vary independently. Therefore when $n\ge 3$ the coefficients $x_I$ are not uniquely determined by $M$.} 
 In other words, the resolution of $M$ is numerically a $\QQ$-linear combination of the resolutions of the $\Pi_I$. Conversely, after possibly scaling one can find a module $M$ that realizes given non-negative rational numbers $x_I$. 
Thus up to scaling, the possible numerical types of $n$-weight resolutions are parametrized by vectors 
\[
x \ =\ \{ x_I  \}_{I \in \binom{[r]}{n-1}}\] 
of Boij-S\"oderberg coefficients $x_I \in \QQ_{\ge 0}$.\footnote{As explained in the previous footnote, this parametrization involves some repetitions.}  In order to emphasize the dependence on $r$, which will shortly become important, we will henceforth write $x_r = \{ x_{r, I} \} $ to denote these coefficients and $\Pi_{r, I}$ to denote the corresponding module. 

We assume now (by scaling) that each $x_{r,I} \le 1$, and since we are interested in numerical questions we allow the $x_{r,I}$ to be real. Denote by \[
\Omega_r \ = \ \Omega_{r,n} \ =_{\text{def}} \  [0,1]^{\binom{r}{n-1}}
\]
the cube parametrizing the resulting coefficient vectors $x_r= \{ x_{r,I} \}$. Then
given
\[  x_r \ = \ \{ x_{r, I} \} \ \in  \ \Omega_{r} ,  \]
set
\[  k_{p,q}(x_r) \ = \ \sum_{I \in \binom{[r]}{n-1}} \, x_{r,I} \cdot k_{p,q}(\Pi_{r,I}). \]
 Thus the $k_{p,q}$ are functions on $\Omega_{r}$ computing the Betti numbers of a module described by a Boij-S\"oderberg coeffient vector $x_r$.\footnote{In the following, we choose normalizations in such a way that each $\Pi_{r,I}$ has formal multiplicity $=1$.  There may be no actual module with this property, 
 but since we are working only up to scale, this doesn't cause any problems.}

We next imagine chosing $x_r \in \Omega_r$ uniformly at random.  The resulting real numbers $k_{p,q}(x_r)$ can then be thought of as  the entries of a random (and hence ``typical") Betti table with $n$ rows and $r +1-n$ columns. This  is illustrated  when $n = 2$ in Figure \ref{Two.Random.Diagrams}, which shows plots of $k_{p,1}(x_r)$ for    random   vectors  $x_{r} \in \Omega_{r}$ with $r = 14$ and $r=60$. Fixing $q \in [1,n]$, our main result describes   the  asymptotics in $p$ of the numbers $k_{p,q}(x_r)$ for such a randomly chosen coefficient vector $x_r$ as $r$ gets very large. It implies in particular that when $r \to \infty$, the Betti numbers $k_{p,q}(x_r)$ become normally distributed with high probability.  

 \vskip 10 pt
\begin{figure} 
   \includegraphics[scale = .85]{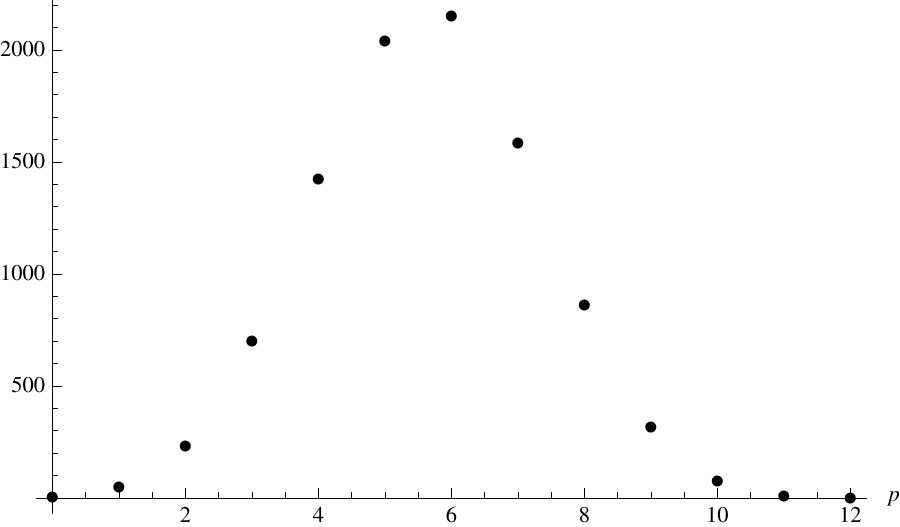} \qquad  \includegraphics[scale = .85]{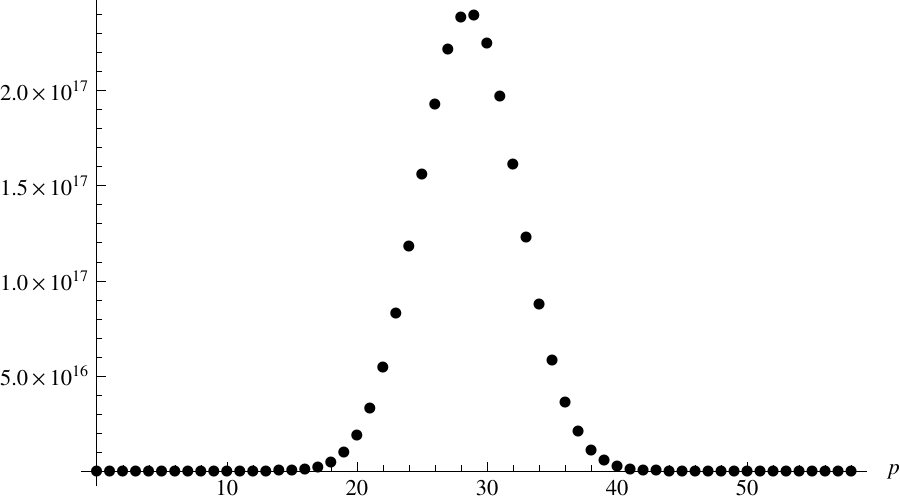}
   \caption{Betti numbers $k_{p,1}$ of random Betti tables with $r = 14$ and with $r=60$.} \label{Two.Random.Diagrams}
  \end{figure}

 We start with a somewhat informal statement.
 \begin{theoremalpha} \label{Main.Theorem.Intro}
 Fix a weight $q \in [1,n]$. If one chooses $x_r \in \Omega_r$ randomly, then as $r \to \infty$ one expects
 \begin{equation} \label{Asymptotic.Formula}
 k_{p,q}(x_r)  \ \sim \ \frac{1}{2^n} \cdot \binom{r-n}{p} \cdot \left ( \frac{p^{q-1}\cdot (r-p-n)^{n-q}}{ (q-1)! \cdot (n-q)! }  \right) \end{equation}
provided that $\frac{p}{r}$ is bounded away from $0$ and $1$.  
 \end{theoremalpha}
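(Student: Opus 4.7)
The strategy is to view $k_{p,q}(x_r)$ as a linear functional on $\Omega_r = [0,1]^{\binom{r}{n-1}}$, use linearity of expectation to reduce the theorem to a combinatorial asymptotic, and then establish concentration via Chebyshev's inequality. Since the coordinates $x_{r,I}$ are independent uniforms on $[0,1]$ with mean $\tfrac{1}{2}$ and variance $\tfrac{1}{12}$, setting $S_r(p,q) := \sum_{I \in \binom{[r]}{n-1}} k_{p,q}(\Pi_{r,I})$ gives
\[
\mathbb{E}[k_{p,q}(x_r)] \ =\ \tfrac{1}{2}\, S_r(p,q), \qquad \mathrm{Var}\bigl(k_{p,q}(x_r)\bigr) \ =\ \tfrac{1}{12}\sum_{I} k_{p,q}(\Pi_{r,I})^2.
\]
The two things to prove are therefore (a) the combinatorial asymptotic
\[
S_r(p,q) \ \sim\ \tfrac{1}{2^{n-1}}\binom{r-n}{p}\frac{p^{q-1}(r-p-n)^{n-q}}{(q-1)!(n-q)!},
\]
and (b) that this asymptotic is in fact typical, i.e.\ that $\mathrm{Var}/\mathbb{E}^2 \to 0$.

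For (a), the Herzog--K\"uhl formula expresses $k_{p,q}(\Pi_{r,I}) = (r-n)!/\prod_{j\neq p}|d_{I,j}-d_{I,p}|$, where $d_{I,j} = j + q_I(j)$ is the degree sequence and the factor $(r-n)!$ is the normalisation corresponding to formal multiplicity $1$. Only those $I$ with $q_I(p)=q$ contribute, which forces a unique decomposition $I = I_L \sqcup I_R$ with $|I_L|=q-1$ jumps $\le p$ and $|I_R|=n-q$ jumps $>p$. Writing $|d_{I,j}-d_{I,p}| = |j-p| + \delta_j(I)$, where $\delta_j(I)$ counts the jumps of $q_I$ strictly between $p$ and $j$, one checks that $\delta_j$ depends only on $I_L$ when $j<p$ and only on $I_R$ when $j>p$. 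The product therefore factors cleanly and the sum over $I$ separates into two independent one-sided sums. The left sum I expect to evaluate, by telescoping or induction on $|I_L|$, to $\tfrac{p^{q-1}}{2^{q-1}(q-1)!\, p!}$, and symmetrically the right sum to $\tfrac{(r-p-n)^{n-q}}{2^{n-q}(n-q)!\, (r-n-p)!}$, uniformly in the regime where $p/r$ is bounded away from $0$ and $1$. Multiplying by the $(r-n)!$ normalization collapses the factorials into $\binom{r-n}{p}$ and produces the $\tfrac{1}{2^{n-1}}$ prefactor; the final $\tfrac{1}{2^n}$ in the theorem then comes from the extra $\tfrac{1}{2}$ of the expectation.

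For (b), the number of $I$ contributing to $S_r(p,q)$ is $N := \binom{p}{q-1}\binom{r-p}{n-q}$, polynomial in $r$ and tending to infinity; moreover the contributing $k_{p,q}(\Pi_{r,I})$ are all of the same exponential order $\binom{r-n}{p}$ up to polynomial-in-$r$ fluctuation, so no single term dominates. Hence $\max_I k_{p,q}(\Pi_{r,I}) = O(S_r(p,q)/N)$, which gives
\[
\sum_I k_{p,q}(\Pi_{r,I})^2 \ \le\ \max_I k_{p,q}(\Pi_{r,I})\cdot S_r(p,q) \ =\ O\bigl(S_r(p,q)^2/N\bigr),
\]
and therefore $\mathrm{Var}/\mathbb{E}^2 = O(1/N) \to 0$. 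Chebyshev's inequality then yields $k_{p,q}(x_r)/\mathbb{E}[k_{p,q}(x_r)] \to 1$ in probability.

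The main obstacle is step (a): organising the telescoping on each side with a uniform error term and checking that the powers of $2$ really do combine to exactly $2^{n-1}$. The asymptotic is morally just a Riemann-sum approximation of a sum of reciprocal products, but matching the constant on the nose will likely require a careful induction or a generating-function identity rather than a crude integral estimate; the remaining error must then be shown to be uniformly $o(1)$ relative to the main term as $r \to \infty$ in the stated range.
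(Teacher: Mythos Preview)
Your proposal is correct and follows essentially the same route as the paper: factor the sum over $I$ into independent left and right pieces, evaluate each via the identity $\sum_{J \in \binom{[a]}{b}} j_1\cdots j_b = \tfrac{a^{2b}}{2^b b!} + O(a^{2b-1})$ (which the paper proves by a short inclusion--exclusion argument), and then apply Chebyshev for concentration. The only cosmetic difference is in the variance step, where the paper observes directly that the normalized coefficient of each $\rX_I$ is at most $1$ (giving $\Var(\rY_{I,r,p})\le \tfrac{1}{3}$), while you argue via $\max_I k_{p,q}(\Pi_{r,I}) = O(S_r/N)$; both bounds suffice.
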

 \noi More precisely,  given $\eps > 0$ denote by
 \[  \Sigma_{p,r,n}(\eps) \ \subseteq \ \Omega_{r} \]
 the set of all coefficient vectors $x_r = \{ x_{r,I}\} \in \Omega_r$ such  that
 \[
 \left | \frac{k_{p,q}(x_r) }{ \big ( \textnormal{RHS of \eqref{Asymptotic.Formula}} \big)}  \, - \, 1 \right | \ > \ \eps.
 \]
 Now suppose that $\{ p_r \}$ is a sequence of positive integers $0 \le p_r \le r-n$ with the property that 
 \[  c \, < \, \frac{p_r}{r} \, < \, 1 - c \]
 for some small $c > 0$. Put the standard  Lebesgue measure on the cube $\Omega_r$, so that $\vol( \Omega_r)=1$. Then the assertion of the Theorem is  that
 \[  
 \lim_{r \to \infty} \,   \vol \big( \Sigma_{p_r,r,n}(\eps)  \big) \,  \ = \ 0, \]
 and that moreover the convergence is uniform in $p_r$ (given the bounding constant $c$). 
 
 The crucial term in \eqref{Asymptotic.Formula} is the binomial coefficient.   Stirling's formula then implies that as functions of $p$, the $k_{p,q}(x)$ display the sort of normal distribution predicted by the Conjecture with high probability. 
 \begin{corollaryalpha} \label{Stirling.Cor.Intro}
 Fix $n$ and $q \in [1,n]$. There exists a function $F(r) = F_{q,n}(r)$
 with the property that if $\{ p_r \}$ is a sequence with
 \[
 p_r \ \to \ \frac{r}{2} \, + \, a \cdot \frac{\sqrt{r}}{2} \]
for some $a$,  then 
\[
F(r) \cdot k_{p_r,q}(x_r) \, \to \, e^{-a^2/2} 
\]
for ``most" choices of $x_r \in \Omega_r$.
\end{corollaryalpha}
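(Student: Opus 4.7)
The plan is to derive Corollary \ref{Stirling.Cor.Intro} essentially as a Stirling computation applied to the explicit asymptotic in Theorem \ref{Main.Theorem.Intro}.

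First, observe that any sequence $\{p_r\}$ with $p_r \to r/2 + a\sqrt{r}/2$ satisfies $p_r/r \to 1/2$, so (say) $c = 1/4$ serves as a bounding constant in the hypothesis of Theorem \ref{Main.Theorem.Intro} for all $r \gg 0$. That theorem then states: for every $\eps > 0$,
\[ \vol\bigl(\Sigma_{p_r, r, n}(\eps)\bigr) \to 0 \text{ as } r \to \infty, \]
where $\Sigma_{p_r, r, n}(\eps) \subseteq \Omega_r$ is the set of coefficient vectors for which $k_{p_r,q}(x_r)$ differs by more than a factor of $1 \pm \eps$ from
\[ G(r) \ := \ \frac{1}{2^n} \binom{r-n}{p_r} \cdot \frac{p_r^{q-1}(r-p_r-n)^{n-q}}{(q-1)!\,(n-q)!}. \]
So it suffices to exhibit a deterministic function $F(r)$ with $F(r)\cdot G(r) \to e^{-a^2/2}$; the ``for most $x_r$'' clause of the corollary then follows immediately from the volume estimate above, combined with the triangle inequality.

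Second, the asymptotics of $G(r)$ reduce to two elementary estimates. Writing $p_r = (r-n)/2 + s_r$, the hypothesis forces $s_r = a\sqrt{r}/2 + n/2 + o(\sqrt{r})$, and since the shift $n/2$ is of lower order than $\sqrt{r-n}$, the standard local central limit estimate
\[ \binom{m}{\tfrac{m}{2} + t\tfrac{\sqrt{m}}{2}} \ \sim \ \frac{2^m}{\sqrt{\pi m/2}} \cdot e^{-t^2/2} \]
applied with $m = r-n$ and $t \to a$ yields
\[ \binom{r-n}{p_r} \ \sim \ \frac{2^{r-n}}{\sqrt{\pi(r-n)/2}} \cdot e^{-a^2/2}. \]
The polynomial factor is easier: since $p_r \sim r/2$ and $r-p_r-n \sim r/2$, one has $p_r^{q-1}(r-p_r-n)^{n-q} \sim (r/2)^{n-1}$. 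Multiplying,
\[ G(r) \ \sim \ \frac{2^{r-n}\cdot (r/2)^{n-1}}{2^n \cdot (q-1)!\,(n-q)! \cdot \sqrt{\pi(r-n)/2}} \cdot e^{-a^2/2}, \]
so we take $F(r)$ to be the reciprocal of the whole prefactor.

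The main (and really rather minor) obstacle is bookkeeping in the Stirling estimate: one must check that the various lower-order shifts --- $n/2$ in the binomial center, the ``$-n$'' inside $r - p_r - n$, and the $o(\sqrt{r})$ slack in the location of $p_r$ --- do not disturb the limit $e^{-a^2/2}$, and that these estimates are uniform in $a$ on bounded sets. The genuinely probabilistic content is entirely inherited from the volume statement of Theorem \ref{Main.Theorem.Intro}; no additional randomness argument is needed.
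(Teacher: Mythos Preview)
Your argument is correct and follows essentially the same route as the paper: invoke the asymptotic of Theorem~\ref{Main.Theorem.Intro} to reduce to a deterministic Stirling computation on $\binom{r-n}{p_r}\cdot \mu(r,p_r)$, then read off the normalizing factor. The paper organizes the Stirling step slightly differently---it writes $\binom{r-n}{p_r} = \binom{r}{p_r}\cdot(\text{ratio})$ and applies the local limit estimate to $\binom{r}{p_r}$---but this is cosmetic and leads to an asymptotically equivalent $F(r)$.

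The one substantive difference is in the meaning of ``most.'' You interpret it as convergence in probability (the volume statement straight out of Theorem~\ref{Main.Theorem.Intro}), and your triangle-inequality reduction to $F(r)G(r)\to e^{-a^2/2}$ is clean and valid under that reading. The paper, however, makes ``most'' precise by passing to the infinite product $\Omega = [0,1]^{\binom{[\infty]}{n-1}}$ via Kolmogorov extension and proving almost-everywhere convergence there (their Theorem~\ref{kpq.Thm.2.1}), before applying Stirling. That is a strictly stronger conclusion, and it requires an additional Borel--Cantelli argument on top of the variance bound underlying Theorem~\ref{Main.Theorem.Intro}. Your approach is more elementary and entirely adequate for the in-probability reading; the paper's buys the upgrade to almost-sure convergence at the cost of setting up the infinite sample space.
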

\noi We refer to \S 2 for the precise meaning of the last phrase. 
 
Returning to the  setting of Conjecture \ref{Betti.Asymptotics.Conj},   one most likely wouldn't  expect the modules arising from geometry to be described by  the sort of uniform choice of  Boij--S\"oderberg coefficients built into  Theorem \ref{Main.Theorem.Intro} and Corollary \ref{Stirling.Cor.Intro}. However it seems that the qualitative picture coming out of our model is rather robust, and persists for other probability measures on $\Omega_r$. We prove a specific result in this direction in \S  \ref{Weight.Function.Subsection}.

The sort of probabilistic model discussed in the present paper is very
different from the notion of genericity that traditionally comes up in commutative algebra.  Specifically, the standard perspective is to consider moduli spaces parametrizing flat families of graded modules (or
graded free resolutions) with fixed numerical invariants, and to examine what happens at a general point of this moduli space.   The  random Betti tables studied here  behave quite differently than what one expects in the algebraic setting. For example, these two approaches display essentially opposite semicontinuity properties.  A random Betti table in our sense will be drawn from the interior of the cone of Betti tables, and thus ``more
random Betti table = less sparse Betti table."  By contrast, in the classical moduli perspective  each individual Betti number is lower semicontinuous, and thus ``more generic resolution = more sparse Betti
table."  We analyze this tension more explicitly in a particular case in \S \ref{Generic.Mxs.Subsection}. From this viewpoint, Conjecture \ref{Betti.Asymptotics.Conj} predicts that the resolutions arising from geometry are extremely non-generic algebraically.\footnote{It is not surprising that this should be so. For instance, the ideal of a very positive embedding of any variety is generated by quadrics, but of course the subspace spanned by these quadrics sits in very special position within the linear system of all quadrics on the ambient projective space.}

We close with a philosophical remark. Based on experience with the case of curves, it was expected in some quarters that high degree embeddings of algebraic varieties would display rather sparse Betti tables. However the results of \cite{ASAV} showed that this is not at all the case. On the other hand, the results of that paper do suggest that the syzygies of very positive embeddings  exhibit uniform asymptotic behavior. If one believes this, it  becomes somewhat  hard to guess -- given the computations here for random Betti tables -- what one might expect other than  a statement along the lines of Conjecture \ref{Betti.Asymptotics.Conj}.

Concerning the organization of the paper, in \S \ref{Random.Diagrams.Section}  we introduce random Betti diagrams and carry out the  computations leading to Theorem \ref{Main.Theorem.Intro}. These are recast using Stirling's formula in \S \ref{Betti.Asymptotics.Section}, where we prove  Corollary \ref{Stirling.Cor.Intro}
 and Proposition \ref{Betti.Asymptotics.Curves}. Finally, \S \ref{Complements.Section}
 is devoted to some variants and open problems. 

The computer algebra programs \texttt{Macaulay2} \cite{Macaulay} and \texttt{Mathematica} \cite{Mathematica} provided useful assistance in studying examples.  We also thank David Eisenbud, Milena Hering and Claudiu Raicu for valuable suggestions. We have particularly profitted from discussions with Kyungyong Lee, who helped us to understand the direction this project should take.

   \section{Random Betti Tables} \label{Random.Diagrams.Section} 
 \setcounter{equation}{0}  
   \numberwithin{equation}{section}

   In this section we introduce some notation and definitions concerning Betti tables, and carry out the main computations.
   
We fix once and for all a positive integer $n\ge 2$, and denote by $r$ a large natural number that will eventually go to infinity. Generally speaking we will render   dependence on $r$ visible in the notation, but leave dependence on $n$ implicit.  We shall be concerned with $n \times (r+1-n)$ matrices of real numbers: the columns will be numbered $0, \ldots, r-n$ and the rows  $1, \ldots , n$. As in the Introduction such  \textit{Betti diagrams} arise upon tabulating the Betti numbers of a finite-length graded module over the polynomial ring $k[x_1, \ldots, x_{r-n}]$, but  in fact we will only be concerned with the tables themselves. Given a Betti table $B$, we denote by $k_{p,q}(B)$ the entry of $B$ in the $q^{\text{th}}$ row and $p^{\text{th}}$ column. (Some authors  work instead with 
$\beta_{p,j}(B)= k_{p,j-p}(B)$.)
   
   The theorem of Eisenbud-Schreyer \cite{ES} implies that the cone of all Betti tables of modules   as in the Introduction is spanned by so-called \textit{pure diagrams} $\pi(r,I)$ constructed as follows. Fix an $(n-1)$-element subset
   \[   I \ = \ (i_1 < \ldots < i_{n-1} ) \]
   of  $[r] =_{\text{def}} \{ 1, \ldots ,r\}$, and write
   \[ \{ 1, \ldots , r \} - I \ = \ \{ d_0, \ldots, d_{r-n} \}, \]
   with $d_0 < d_1< \ldots < d_{r-n}$. Then $\pi(r,I)$ is the Betti diagram with 
   $k_{p,j} = 0$ for $j \ne d_p-p$, and
  \begin{equation} \label{pure.diagram.I}
    k_{p, d_p-p}\big( \pi(r,I) \big )\ = \ (r-n)! \cdot \left( \prod_{\ell \ne p} \frac{1}{| d_{\ell} - d_p |} \right ). \end{equation}
   In other words, $\pi(r, I)$ is (up to scaling) the Betti table of a module all of whose syzygy modules are generated in a single degree. The integers $( d_0 < \ldots < d_{r-n})$ are the \textit{degree sequence} of $\pi(r,I)$, and the subset $I \subseteq [r]$ determines the values of $p$ at which the degree sequence  skips an integer. The numerical factors in \eqref{pure.diagram.I} are such that $\pi(r, I)$ has (formal) multiplicity $= 1$. (This follows from the formula
for the multiplicity of a module with a pure resolution; see, for
instance,~\cite[p.~88]{BS}.)
   
   \begin{example}
Consider the table $\pi(7,\{2,4\})$.  This corresponds to the degree sequence
\[
\{1,2,3,4,5,6,7\}\setminus \{2,4\}=
\{1,3,5,6,7\},\]
and one has:
\[
\pi(7,\{2,4\})=\bordermatrix{
&0&1&2&3&4\cr
1&\frac{1}{10}&-&-&-&-\cr
2&-&\frac{1}{2}&-&-&-\cr
3&-&-&\frac{3}{2}&\frac{8}{5}&\frac{1}{2}
}. \qed
\]
\end{example}
   
   It will be useful to have an alternative expression for the entries of $\pi(r, I)$. As a matter of notation, for $m \in \ZZ$, write 
   \[  m_+ \ = \ \max \, \{ m, 0 \}. \]
    Then \eqref{pure.diagram.I}
 yields the following, whose proof we leave to the reader:
   \begin{lemma}  \label{pure.diagram.II}
   The entries of $\pi(r, I)$ are given by
   \begin{equation}   k_{p,q} \big( \pi (r, I) \big) \ = \ (r-n)! \cdot \frac
   {\prod_{\alpha = 1}^{q-1} ( p + q - i_{\alpha})_+  \cdot \prod_{\alpha = q}^{n-1} (  i_{\alpha}-p-q)_+}{(p+q-1)!\cdot (r-p-q)!}. \ \qed \end{equation}
  
   \end{lemma}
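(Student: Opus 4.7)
My plan is to derive the formula directly from \eqref{pure.diagram.I} via a case analysis based on whether $q = d_p - p$.  Adopting the conventions $i_0 = 0$ and $i_n = r+1$, the first step is to record the equivalence
\[
q \ = \ d_p - p \ \iff \ i_{q-1} \ < \ p+q \ < \ i_q,
\]
which holds because $d_p - p = q$ means precisely that $p+q \notin I$ and that exactly $q-1$ elements of $I$ lie below $p+q$.

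Next I would handle the vanishing half: when $q \ne d_p - p$, the right-hand side of the asserted formula is zero.  If $p+q = i_\beta$ for some $\beta$, then exactly one of the factors $(p+q - i_\beta)_+$ (for $\beta \le q-1$) or $(i_\beta - p - q)_+$ (for $\beta \ge q$) appears in the product and equals $0$.  If instead $p+q \notin I$ but $q \ne d_p - p$, then either $i_{q-1} > p+q$, forcing $(p+q - i_{q-1})_+ = 0$, or $i_q < p+q$, forcing $(i_q - p - q)_+ = 0$.  Either outcome matches the vanishing $k_{p,q}(\pi(r,I)) = 0$ dictated by \eqref{pure.diagram.I}.

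In the remaining case $q = d_p - p$, the inequalities above make every truncation $(\cdot)_+$ vacuous, and the claim reduces to the identity
\[
\prod_{\ell \ne p} |d_\ell - (p+q)| \ = \ \frac{(p+q-1)!\,(r-p-q)!}{\prod_{\alpha=1}^{q-1}(p+q - i_\alpha) \cdot \prod_{\alpha=q}^{n-1}(i_\alpha - p - q)}.
\]
I would prove this by partitioning $\{1,\dots,r\}\setminus\{p+q\} = I \sqcup \{d_\ell : \ell \ne p\}$ and applying the trivial factorial identity
\[
\prod_{\substack{1 \le m \le r \\ m \ne p+q}} |m - (p+q)| \ = \ (p+q-1)!\,(r-p-q)!.
\]
Splitting the factor $\prod_\alpha |i_\alpha - (p+q)|$ into the subproducts over $\alpha \le q-1$ (where $i_\alpha < p+q$) and $\alpha \ge q$ (where $i_\alpha > p+q$) removes the absolute values and yields exactly the denominator above.

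There is no real obstacle here; the entire verification is elementary book-keeping.  The only point that deserves care is confirming that the truncations $(\cdot)_+$ faithfully encode the vanishing $k_{p,j}(\pi(r,I)) = 0$ for $j \ne d_p-p$, which is why I would dispose of that case before turning to the algebraic identity.
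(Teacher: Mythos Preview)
Your argument is correct. The paper itself does not supply a proof of this lemma---it states the formula and leaves the verification to the reader---so your derivation from \eqref{pure.diagram.I} via the partition $[r]\setminus\{p+q\} = I \sqcup \{d_\ell : \ell \ne p\}$ and the factorial identity for $\prod_{m\ne p+q}|m-(p+q)|$ is exactly the elementary check the authors had in mind.
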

   
   We next consider random Betti tables. For each $I \in \binom{[r]}{n-1}$, let $\rX_I$ be a random variable uniformly distributed on $[0,1]$. We take the $\rX_I$ to be independent. Then  put
   \begin{equation}  \rB_r \ = \ \rB_{r, n} \ = \ \sum_{I \in \binom{[r]}{n-1}} \, \rX_I \cdot \pi(r,I).
   \end{equation}
  Heuristically, one thinks of $\rB_{r}$ as the Betti table obtained by choosing  independent random Boij-S\"oderberg coefficients $x_I \in [0,1]$.  More formally, $\rB_{r}$ is a random $n \times (r+1-n)$ matrix, and for each $p,q$ the entry $k_{p,q}(\rB_{r})$ is a random variable. We write $\mathbf{e}_{p,q}(\rB_{r})$ for the expected value of this variable, i.e.  \begin{equation}
\mathbf{e}_{p,q}(\rB_{r}) \ =_{\text{def}} \ \mathbf{E} \big( k_{p,q} (\rB_{r}) \big).   
   \end{equation}

  \begin{remark} \label{Ignore.Simplex.Remark}
  When $n = 2$ the pure tables are indexed by a single integer $i \in [1,r] = \binom{[r]}{1}$. In this case the $\pi(r,i)$ are linearly independent, and there is a unique way to express every Betti table as a linear combination of pure diagrams. However when $n \ge 3$ chains in the set of degree sequences determine the simplices of a simplicial complex whose structure must be taken into account to get a unique decomposition. (See \cite{BS} or \cite{ES} for further details.) Here we have chosen to ignore this additional structure. In large part this is motivated just  by the desire to simplify the statements and computations. However there is emerging evidence from other directions that it can be advantageous to deal with all possible pure Betti tables, instead of just collections from a single simplex.  For instance, the recent work  \cite{gibbons} provides a pleasingly simple description of a pure table decomposition of any complete intersection, but this description relies on a collection of pure Betti tables that do not come from a single simplex.  
  \end{remark}
  
Our main result computes $\eee_{p,q}(\rB_{r})$, and shows that if we fix $q$ and let $r$ tend to infinity, then $k_{p,q}(\rB_{r,n})$ converges (up to an essentially polynomial factor)  to a binomial  distribution in $p$.

\begin{theorem} \label{Expected.Value.Thm}
 The expected entries of $\rB_{r}$ are given by the formula
\begin{equation} \label{Expected.Value.Equation}
\eee_{p,q}(\rB_{r}) \ = \ \frac{1}{2^n} \cdot \binom{r-n}{p} \cdot \left ( \frac{p^{q-1}\cdot (r-p-n)^{n-q}}{ (q-1)! \cdot (n-q)! } + o\big( p, r-p)^{n-1} \right) , 
\end{equation}
where the error term the right indicates a quantity that is $o(p^i (r-p)^j)$ for some $ i + j = n-1$. \end{theorem}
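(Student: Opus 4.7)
The strategy is to exploit linearity of expectation to reduce everything to a closed-form combinatorial sum, identify that sum with a product of two elementary symmetric polynomials, and then run the standard asymptotic estimate on these.

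\emph{Step 1: Reduction via linearity.} Since $\rB_r = \sum_{I} \rX_I \cdot \pi(r,I)$ with the $\rX_I$ independent uniform on $[0,1]$, and since $\mathbf{E}(\rX_I) = 1/2$, linearity of expectation gives
\[
\eee_{p,q}(\rB_r) \ = \ \tfrac{1}{2}\sum_{I \in \binom{[r]}{n-1}} k_{p,q}\big(\pi(r,I)\big).
\]
Now use Lemma \ref{pure.diagram.II}. The crucial observation is that $k_{p,q}(\pi(r,I))$ vanishes unless the indices $i_1 < \cdots < i_{n-1}$ of $I$ satisfy $i_\alpha \le p+q-1$ for $\alpha \le q-1$ and $i_\alpha \ge p+q+1$ for $\alpha \ge q$ (the value $i_\alpha = p+q$ kills the term). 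Thus choosing $I$ decouples into choosing the first $q-1$ indices from $\{1,\ldots,p+q-1\}$ and the last $n-q$ from $\{p+q+1,\ldots,r\}$.

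\emph{Step 2: Identification with elementary symmetric polynomials.} After the substitutions $j_\alpha = p+q-i_\alpha$ (for $\alpha \le q-1$) and $j_\alpha = i_\alpha - p - q$ (for $\alpha \ge q$), the two sums become, respectively, $e_{q-1}(1,2,\ldots,p+q-1)$ and $e_{n-q}(1,2,\ldots,r-p-q)$, where $e_k$ denotes the $k$-th elementary symmetric polynomial. Hence
\[
\sum_{I} k_{p,q}\big(\pi(r,I)\big) \ = \ \frac{(r-n)!}{(p+q-1)!\,(r-p-q)!}\cdot e_{q-1}(1,\ldots,p+q-1)\cdot e_{n-q}(1,\ldots,r-p-q).
\]

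\emph{Step 3: Asymptotics of $e_k(1,\ldots,N)$.} The standard computation proceeds by noting
\[
k!\cdot e_k(1,\ldots,N) \ = \ \sum_{\text{distinct } j_1,\ldots,j_k \in [N]} j_1\cdots j_k \ = \ \Big(\tfrac{N(N+1)}{2}\Big)^k \,-\, (\text{coincidence terms}),
\]
where the coincidence terms are $O(N^{2k-1})$. Therefore $e_k(1,\ldots,N) = \tfrac{1}{k!\,2^k}N^{2k} + O(N^{2k-1})$. Applying this with $N = p+q-1$ and $N = r-p-q$ gives the leading contributions $\tfrac{p^{2(q-1)}}{(q-1)!\,2^{q-1}}$ and $\tfrac{(r-p-n)^{2(n-q)}}{(n-q)!\,2^{n-q}}$, with appropriate error terms.

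\emph{Step 4: Matching to the binomial coefficient.} Writing $\binom{r-n}{p} = \frac{(r-n)!}{p!(r-n-p)!}$, one has
\[
\frac{(r-n)!}{(p+q-1)!\,(r-p-q)!} \ = \ \binom{r-n}{p}\cdot \frac{p!}{(p+q-1)!}\cdot \frac{(r-n-p)!}{(r-p-q)!},
\]
and the last two ratios are $\frac{1}{p^{q-1}}(1+O(1/p))$ and $\frac{1}{(r-p)^{n-q}}(1+O(1/(r-p)))$, respectively. Combining this with Step 3 and collecting the factor $\tfrac{1}{2}\cdot \tfrac{1}{2^{q-1}}\cdot \tfrac{1}{2^{n-q}} = \tfrac{1}{2^n}$ yields the main term of \eqref{Expected.Value.Equation}, and the accumulated errors combine into a term of the form $o(p^i(r-p)^j)$ with $i+j = n-1$ as claimed.

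\emph{Anticipated difficulty.} The conceptual steps are straightforward; the only real care is needed in Step 3, where one must track the lower-order terms in $e_k(1,\ldots,N)$ uniformly in $N$ (so that the error statement holds in the form $o(p^i(r-p)^j)$ rather than something weaker) and verify that the error terms from Steps 3 and 4 indeed combine cleanly, including the edge behavior when $p$ or $r-p$ is not itself tending to infinity rapidly. This essentially amounts to noting that each of the two asymptotic expansions is a polynomial identity with an explicit $O(1/N)$ relative error.
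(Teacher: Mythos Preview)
Your proof is correct and follows essentially the same route as the paper's. The paper isolates your Step~3 as a standalone lemma (Lemma~\ref{Simplex.Lattice.Pts.Lemma}, computing $\sigma_b(a)=\sum_{J\in\binom{[a]}{b}}j_1\cdots j_b$), and otherwise the factoring of the sum over $I$ into two independent pieces and the recombination with the binomial coefficient proceed exactly as you describe; your use of the phrase ``elementary symmetric polynomial'' is the only cosmetic difference.
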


\begin{remark} Note that we could replace the factor $(r-p-n)$ in \eqref{Expected.Value.Equation} by $(r-p)$ without changing the statement. However we prefer to emphasize the  symmetry between $(p,q)$ and $(r-n-p, n+1-q)$ inherent in the situation.
\end{remark}

With $n$ and $q$ fixed, write
\begin{equation}
\label{Def.mu}   \mu(r,p) \ = \ \mu_{q,n}(r,p) \ = \ \frac{1}{2^n} \cdot \frac{p^{q-1}\cdot (r-p-n)^{n-q}}{ (q-1)! \cdot (n-q)! } \end{equation}
for the expression appearing on the right in \eqref{Expected.Value.Equation}.
The law of large  numbers then implies that $k_{p, q}(\rB_{r,n})$ converges to $\eee_{p,q}(\rB_{r,n})$. In fact:
\begin{corollary} \label{Convergence.Corollary} Fix a weight  $q \in [1,n]$, and let $\{p_r \}$
  be any sequence of positive integers $ 0 \le p_r \le r-n $ such that 
  \[ c \ \le \ \frac{p_r}{r} \ \le 1-c \] for some small $c > 0$. Then as $r \to \infty$,
\[ \frac{ k_{p_r,q}(\rB_{r}) }{  \binom{r-n}{p_r} \cdot \mu(r,p_r) } \longrightarrow 1\]
in probability, and moreover the convergence is uniform in $p_r$ \tn{(}for given $c$\tn{)}.  
\end{corollary}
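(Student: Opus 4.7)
The strategy is a standard second-moment argument based on Chebyshev's inequality. Since the $\rX_I$ are independent and uniform on $[0,1]$ (so $\mathbf{E}(\rX_I)=1/2$, $\Var(\rX_I)=1/12$), writing $k_{p,q}(\rB_r) = \sum_{I}\rX_I \cdot k_{p,q}(\pi(r,I))$ yields
\[
\eee_{p,q}(\rB_r) \;=\; \tfrac{1}{2}\sum_I k_{p,q}(\pi(r,I)), \qquad
\Var\bigl(k_{p,q}(\rB_r)\bigr) \;=\; \tfrac{1}{12}\sum_I k_{p,q}(\pi(r,I))^2.
\]
Chebyshev's inequality will then give convergence in probability of $k_{p_r,q}(\rB_r)/\eee_{p_r,q}(\rB_r)$ to $1$ provided $\Var/\eee^2 \to 0$ uniformly for $c \le p_r/r \le 1-c$, and the uniform asymptotic $\eee_{p_r,q}(\rB_r) = (1+o(1))\binom{r-n}{p_r}\mu(r,p_r)$ from Theorem~\ref{Expected.Value.Thm} will let us replace the expectation by the explicit normalization in the statement.

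To bound the variance, I would use the elementary inequality $\sum_I k^2 \le (\max_I k)\cdot (\sum_I k)$, reducing the task to showing $\max_I k_{p,q}(\pi(r,I))/\sum_I k_{p,q}(\pi(r,I)) \to 0$. The denominator equals $2\,\eee_{p,q}(\rB_r)$, which is $\asymp \binom{r-n}{p}\cdot r^{n-1}$ by Theorem~\ref{Expected.Value.Thm} in the range $c \le p/r \le 1-c$ (since the factor $p^{q-1}(r-p-n)^{n-q}$ is then $\asymp r^{n-1}$). For the numerator, Lemma~\ref{pure.diagram.II} together with the crude estimates $(p+q-i_\alpha)_+ \le p+q-1$ and $(i_\alpha-p-q)_+ \le r-p-q$ yields
\[
\max_I k_{p,q}(\pi(r,I)) \;\le\; (r-n)! \cdot \frac{(p+q-1)^{q-1}(r-p-q)^{n-q}}{(p+q-1)!\,(r-p-q)!} \;=\; O\!\left(\binom{r-n}{p}\right),
\]
because $(p+q-1)!/p!$ and $(r-p-q)!/(r-n-p)!$ are products of $q-1$ and $n-q$ consecutive integers of size $\asymp p$ and $\asymp r-p$ respectively, which cancel the polynomial factors in the numerator. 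Consequently $\Var(k_{p,q}(\rB_r))/\eee_{p,q}(\rB_r)^2 = O(r^{-(n-1)})$, uniformly in $p$ on the required range.

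Putting these together: for any fixed $\delta > 0$, Chebyshev's inequality gives $\mathrm{Pr}\bigl(|k_{p_r,q}(\rB_r)/\eee_{p_r,q}(\rB_r)-1|>\delta\bigr) = O(r^{-(n-1)})\to 0$ uniformly in $p_r$, and multiplying by the ratio $\eee_{p_r,q}(\rB_r)/(\binom{r-n}{p_r}\mu(r,p_r)) \to 1$ (also uniform, by Theorem~\ref{Expected.Value.Thm}) yields the corollary. The only thing that requires care is uniformity of all estimates in $p_r$, but since everything reduces to ratios of polynomials in $p$ and $r-p$ on the compact range $p/r \in [c,1-c]$, this is essentially automatic; the hypothesis $n \ge 2$ enters precisely to ensure the variance bound $r^{-(n-1)}$ tends to zero. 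There is no serious obstacle — the heavy lifting has all been done in computing $\eee_{p,q}(\rB_r)$, and the second-moment step is essentially a one-line consequence.
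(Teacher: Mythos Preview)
Your argument is correct and follows essentially the same route as the paper's proof: both are straightforward second-moment/Chebyshev arguments resting on the single observation that each pure-table entry satisfies $k_{p,q}(\pi(r,I)) \le \binom{r-n}{p}$ (equivalently, the coefficient of $\rX_I$ in the normalized variable $\rY_{I,r,p}$ is at most $1$), together with the expectation asymptotic from Theorem~\ref{Expected.Value.Thm}, and both arrive at the same variance-to-mean-square bound $O(r^{-(n-1)})$. The only cosmetic difference is that the paper normalizes first (working with $\rY_{I,r,p}$ and bounding $\Var(\rY_{I,r,p})\le \tfrac13$ termwise, then summing over the $\binom{r}{n-1}$ indices), whereas you bound $\sum_I k^2 \le (\max_I k)(\sum_I k)$; these are equivalent bookkeepings of the same estimate.
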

\noi  In other words, given $\eps, \eta > 0$, one can find an integer $R$ such that if $r \ge R$ then
\[
\mathbf{P} \left ( \left | \frac{ k_{p_r,q}(\rB_{r}) }{  \binom{r-n}{p_r} \cdot \mu(r,p_r) } - 1\right | > \eps \right) \ < \ \eta,\]
and furthermore one can take $R$ to be independent of $p_r$ provided that $c < p_r/r < 1-c$.\footnote{Here and subsequently, we are writing $\PP$ to denote the probability of an event.} A more concrete interpretation of this assertion is spelled out  following the statement of Theorem \ref{Main.Theorem.Intro}
in the Introduction.

Turning to the proof of Theorem \ref{Expected.Value.Thm}, we begin with an  elementary lemma. As a matter of notation, given positive integers $a \ge b$, and a $b$-element subset $J \in \binom{[a]}{b}$ of $[a]$, write $J = \{ j_1 < \ldots < j_b \}$  for the elements of $J$ in increasing order.
\begin{lemma} \label{Simplex.Lattice.Pts.Lemma}
For fixed $b \ge 0$, consider the function $\sigma_b : \ZZ_{\ge b} \lra \ZZ$ given by
\[  \sigma_b(a) \ = \ \sum_{J \in \binom{[a]}{b}} \, j_1j_2 \ldots j_b. \]
Then
\[   \sigma_b(a) \ = \ \frac{1}{b!} \cdot \frac{a^{2b}}{2^b} \ + \ S_{2b-1}(a), \]
where $S_{2b-1}(x) \in \QQ[x]$ is a polynomial of degree $2b - 1.$\end{lemma}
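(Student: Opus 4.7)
The plan is to express $\sigma_b(a)$ as the elementary symmetric polynomial $e_b(1,2,\ldots,a)$ and extract its leading behavior by comparison with a power of the first power-sum. The key input is the classical fact that $p_k(a) := 1^k + 2^k + \cdots + a^k$ is a polynomial in $a$ of degree $k+1$ with leading coefficient $1/(k+1)$ (Faulhaber's formula); this immediately implies that $\sigma_b(a)$ itself is a polynomial in $a$, so the content of the lemma is purely a statement about its top two terms.

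Concretely, I would start from the tautological identity
\[
\bigl(p_1(a)\bigr)^b \ = \ \sum_{(j_1,\ldots,j_b)\in [a]^b} j_1 j_2 \cdots j_b,
\]
and split the right-hand side according to the partition of $\{1,\ldots,b\}$ determined by coincidences among the indices. The tuples with all entries distinct contribute exactly $b!\cdot \sigma_b(a)$. Each other contribution corresponds to a set-partition of $\{1,\ldots,b\}$ into $m<b$ blocks of sizes $k_1,\ldots,k_m$ (with some $k_i\geq 2$ and $\sum k_i = b$), and evaluates to a product $p_{k_1}(a)\cdots p_{k_m}(a)$ up to a combinatorial constant.

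The degree count is then immediate: such a product is a polynomial in $a$ of degree $\sum_{i=1}^m(k_i+1) = b+m \leq b+(b-1) = 2b-1$. On the other hand,
\[
\bigl(p_1(a)\bigr)^b \ = \ \Bigl(\tfrac{a(a+1)}{2}\Bigr)^{b} \ = \ \frac{a^{2b}}{2^b} \ + \ (\text{polynomial of degree}\leq 2b-1).
\]
Solving for $\sigma_b(a)$ and dividing by $b!$ yields
\[
\sigma_b(a) \ = \ \frac{1}{b!}\cdot\frac{a^{2b}}{2^b} \ + \ S_{2b-1}(a)
\]
for some $S_{2b-1}(x)\in\QQ[x]$ of degree at most $2b-1$, which is the claim.

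There is essentially no serious obstacle: the only point requiring a moment's care is the bookkeeping that translates a set-partition of $\{1,\ldots,b\}$ into a product of power sums, and even that can be bypassed by simply invoking Newton's identities to write $e_b$ as a polynomial in $p_1,\ldots,p_b$ in which every monomial other than $p_1^b/b!$ has strictly fewer factors, hence degree in $a$ at most $2b-1$.
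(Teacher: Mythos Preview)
Your approach is essentially the same as the paper's: both rewrite $b!\,\sigma_b(a)$ as the sum over all $b$-tuples minus the sum over tuples with coincidences, identify the first with $\bigl(\tfrac{a(a+1)}{2}\bigr)^b$, and bound the degree of the second. The paper dispatches the coincident terms by a brief induction on $b$, whereas you organize them by set-partitions; these are two packagings of the same computation.

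One small imprecision: the contribution from a fixed set-partition with block sizes $k_1,\ldots,k_m$ is $\sum_{v_1,\ldots,v_m\text{ distinct}} v_1^{k_1}\cdots v_m^{k_m}$, which is \emph{not} literally a constant times $p_{k_1}(a)\cdots p_{k_m}(a)$ (that product is the sum over all, not distinct, $v_i$). This does not affect the conclusion, since either an induction on $m$ or the Newton's-identities argument you mention at the end gives the required degree bound $b+m\le 2b-1$ rigorously; the latter route is in fact the cleanest way to finish.
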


\begin{proof}
Observe to begin with that
\[
\sum_{J \in \binom{[a]}{b}} j_1 j_2 \cdots j_b \ = \ \frac{1}{b!} \cdot\sum_{j_1, \ldots, j_b   \text{ distinct} }   j_1 j_2 \cdots j_b.
\]
On the other hand
\begin{equation}
\sum_{j_1, \ldots, j_b   \text{ distinct} } j_1 j_2 \cdots j_b  \ = \ \left( \sum_{j_1 , \ldots, j_b  \in [1,a]} j_1 j_2 \cdots j_b \right) \ - \ \left(\sum_{\text{two or more $j_k$ coincident}}   j_1 j_2 \cdots j_b\right).
\tag{*} \end{equation}
But first term on the right in (*) is \[\prod_{k = 1}^b \left ( \sum_{j_k = 1}^a j_k \right) \ = \ \left( \frac{(a+1)a}{2} \right) ^b, \]
while by induction on $b$ (and descending induction on the number of coincidences) the second term is of the form $R_{2b-1}(a)$ for some $R_{2b-1}(x) \in \QQ[x]$ of degree $2b-1$.  The Lemma follows.
\end{proof}

   \begin{proof}[Proof of Theorem \ref{Expected.Value.Thm}]
 Throughout the proof we fix $n$ and $q \in [1,n]$.      Note first from Lemma \ref{pure.diagram.II} that
       \small \begin{align*}
k_{p,q}(\pi(r,I))
&=\left(\frac{(p+q-i_1)_+\cdots (p+q-i_{q-1})_+\cdot (i_q-p-q)_+\cdots (i_{n-1}-p-q)_+}{(p+q-1)\cdots (p+1)\cdot (r-p-q)\cdots (r-p-(n-1))}\right)\binom{r-n}{p}.
\end{align*}
 \normalsize
  Given $I \in \binom{[r]}{n-1}$ define
  \small
   \begin{equation} \label{Def.YI}
\rY_{I,r,p}\ = \ \left(\frac{(p+q-i_1)_+\cdots (p+q-i_{q-1})_+\cdot (i_q-p-q)_+\cdots (i_{n-1}-p-q)_+}{(p+q-1)\cdots (p+1)\cdot (r-p-q)\cdots (r-p-(n-1))}\right)\cdot \rX_I.
\end{equation}
\normalsize
Thus for fixed $p, q$ and $r$:
\begin{equation}\label{kpq.from.Y}
\frac{k_{p,q}(\rB_r) }{\binom{r-n}{p} \cdot \mu(r,p)} \ = \ \frac{ \sum_{I \in \binom{[r]}{n-1}} \rY_{I,r,p}}{\mu(r,p)} ,
\end{equation}
where  $\mu(r,p)$ is the quantity from  \eqref{Def.mu}.

Now since $\mathbf{E}(\rX_I) = \frac{1}{2}$,
 for  fixed $r,p$ and $I$, we have:
 \small
\[ 
\mathbf{E}(\rY_{I,r,p})\ =\ \frac{1}{2}\cdot \left(\frac{(p+q-i_1)_+\cdots (p+q-i_{q-1})_+\cdot (i_q-p-q)_+\cdots (i_{n-1}-p-q)_+}{(p+q-1)\cdots (p+1)\cdot (r-p-q)\cdots (r-p-(n-1))}\right).
\]
\normalsize
Hence for fixed $p$ and $r$:
\small
\begin{align*}
\eee_{p,q}(\rB_{r,n}) 
&=\ \binom{r-n}{p}\cdot \sum_{I \in \binom{[r]}{n-1}} \mathbf{E}\left(\rY_{I,r,p}\right)\\
&= \ \binom{r-n}{p}\cdot \frac{1}{2} \cdot \frac{\sum_{I\in \binom{[r]}{n-1}} (p+q-i_1)_+\cdots (p+q-i_{q-1})_+ \cdot (i_q-p-q)_+\cdots (i_{n-1}-p-q)_+}{ (p+q-1)\cdots (p+1)\cdot(r-p-q)\cdots (r-p-(n-1))}\\
&=\ \binom{r-n}{p}\cdot \frac{1}{2}\cdot\frac{\left(\sum_{J\in \binom{[p+q-1]}{q-1}} j_1j_2\cdots j_{q-1}\right) \left( \sum_{J'\in \binom{[r-p-q]}{n-q}} j'_1\cdots j'_{n-q}\right)}{(p+q-1)\cdots (p+1)\cdot(r-p-q)\cdots (r-p-(n-1))}\\
&=\ \binom{r-n}{p}\cdot \frac{1}{2}\cdot \left( \frac{\sum_{J\in \binom{[p+q-1]}{q-1}} j_1j_2\cdots j_{q-1}}{(p+q-1)\cdots (p+1)}\right)\cdot\left( \frac{\sum_{J'\in \binom{[r-p-q]}{n-q}} j'_1\cdots j'_{n-q}}{(r-p-q)\cdots (r-p-(n-1))}\right).
\end{align*}
\normalsize
 Lemma \ref{Simplex.Lattice.Pts.Lemma} then implies  
 that
\begin{equation} \label{Big.Eqn}
\eee_{p,q} (\rB_{r}) \ = \ \binom{r-n}{p} \cdot \mu(r,p) \cdot \left(
 \frac{
 1 + \frac{A_{2q-3}(p)}{p^{2q-2}}}{ 1 + \frac{B_{2q-3}(p)}{p^{2q-2}}}\right)  \cdot  \left(
 \frac{
 1 + \frac{C_{2n-2q-1}(r-p)}{(r-p)^{2n - 2q}}}{ 1 + \frac{D_{2n-2q-1}(r-p)}{(r-p)^{2n - 2q }}}\right),
\end{equation}
where $A$, $B$, $C$, and $D$ are polynomials of the indicated degrees whose coefficients depend on $n$ and $q$. The Theorem follows.
\end{proof}
   
   \begin{remark} \label{Unif.Convergence.Remark}
  For later reference, we record a consequence of  the proof just completed. Specifically,  suppose given a sequence $\{ p_r \}$ with 
 \[  
 c \ \le \ \frac{p_r}{r} \ \le 1 - c
 \] 
 for some $c > 0$. Then it follows from \eqref{Big.Eqn}  that
 \begin{equation} \label{Exp.Value.Sum.Y.Eqn} \lim_{r \to \infty} \, \left( \frac{ \sum \mathbf{E}(\rY_{I, r,p_r})}{\mu(r,p_r) } \right) \ = \ 1, \end{equation}
 and that moreover the convergence is uniform in $p_r$ given $c$.
 \end{remark}
   
   \begin{proof}[Proof of Corollary \ref{Convergence.Corollary}.]
   This is essentially just the weak law of large numbers, but for keeping track of the dependence in $p$ it is quickest to go through the argument leading to that result, as in \cite[\S 5.1]{Chung}. As before, we fix $n$ and $q$ at the outset. Note to begin with that the coefficient of $\rX_I$ in \eqref{Def.YI} is $\le 1$, and hence
   \[   \Var (\rY_{I, r, p}) \ \le \ \frac{1}{3}  \]
   for every $I$, $r$ and $p$. Furthermore, since the $\rX_I$ are independent, for given $r$ and $p$ the $\rY_{I,r,p}$ are uncorrelated.  
Therefore, with $r$ and $p$ fixed  and $\delta > 0$,  Chebyschev's inequality  yields:
  \begin{align*}
 \mathbf{P} \left(\,  \left | \frac{ \sum \rY_{I,r,p} - \mathbf{E}\left( \sum \rY_{I, r, p} \right) }{\mu(r,p)} \right | \, > \, \delta \, \right ) \ &\le \ \frac{1}{\delta^2 \cdot \mu(r,p)^2} \, \cdot \, \Var \left( \sum \rY_{I,r, p} \right) \\ &\le \ \frac{1}{\delta^2 \cdot \mu(r,p)^2} \cdot \frac{\binom{r}{n-1} }{3} .\end{align*}
 Now fix a sequence $\{ p_r \}$ with $\frac{p_r}{r}$ bounded away from $0$ and $1$. Then   
 \[
 C_1 \cdot r^{n-1} \ \le \ \mu(r, p_r) \ \le \ C_2 \cdot r^{n-1}
 \]  for suitable positive constants $C_1, C_2$  independent of $p_r$. Hence there is a constant $C_3$, independent of $p_r$, such that 
 \begin{equation} \label{Prob.Bound}
 \mathbf{P} \left(\,  \left | \frac{ \sum \rY_{I,r,p_r} - \mathbf{E}\left( \sum \rY_{I, r, p_r} \right) }{\mu(r,p_r)} \right | \, > \, \delta \, \right ) \ \le \ \frac{C_3}{\delta^2 \cdot r^{n-1}} . \end{equation}
 Note that as $r \rightarrow \infty$, the term on the right $\rightarrow 0$.  Moreover, as we saw in Remark \ref{Unif.Convergence.Remark}
   \begin{equation} 
   \lim_{r \to \infty}  \frac{\mathbf{E}\left( \sum \rY_{I, r, p_r} \right ) }{\mu(r,p_r) } =  1, \notag \end{equation}
   uniformly in $p_r$. It follows that given $\eps > 0$:
   \[ 
\lim_{r \to \infty}   
\ \mathbf{P} \left( \ \left | \frac{ \mathbf{E}( \sum \rY_{I,r,p_r})}{\mu(r, p_r)}  - 1  \right | \  > \ \eps \, \right )   \ = \
 0,
  \]
uniformly in $p_r$.    Recalling finally that
\[
\frac{     \mathbf{E}(\sum \rY_{I, r, p_r})}{\mu(r, p_r)} \ = \ \frac{\eee_{p_r,q}(\rB_{r,n})}{\binom{r-n}{p} \cdot \mu(r, p_r)},
\]
we arrive at the required statement.
 \end{proof}
 
 \section{Betti Asymptotics} \label{Betti.Asymptotics.Section}
 This section contains two applications of Stirling's formula. First, we recast the computations of the previous section  to give asymptotic expressions for the entries of a random Betti table. Then we return to the case of curves, and prove Proposition \ref{Betti.Asymptotics.Curves}  from the Introduction. 
 
 In order to get clean statements for the Betti tables it will be helpful to replace convergence in probability with almost everywhere convergence. So we start with some definitions and observations in this direction. As before we fix once and for all an integer $n \ge 2$. Denote by
 \[   \binom{ [\infty]}{n-1} \]
 the set of all $(n-1)$-element subsets of $\ZZ_{> 0}$, so that
 \[   \binom{ [\infty]}{n-1}  \ = \ \bigcup_{r \ge n-1} \, \binom{[r]}{n-1}.\]
 Then put
 \[   \Omega \ = \ [0,1]^{ \binom{ [\infty]}{n-1}}.\]
 This is a countable product of copies of the unit interval, and there are natural projections
 \[  \rho_r : \Omega \ \lra \ \Omega_r. \]
 By a standard procedure (Kolmogorov's Extension Theorem, \cite[Thm. A.3.1]{Durrett}) there is a unique probability measure on $\Omega$ compatible with pull-backs of the standard measures on the $\Omega_r$. Via composition with $\rho_r$, the various functions considered in the previous section -- notably $\rX_I$, $\rY_{I, r,p}$ and  $k_{p,q}(\rB_r)$ -- determine measureable functions on $\Omega$. In other words, all these quantities are random variables on $\Omega$, and the computations of expectations and probablities carried out in  the previous section remain valid in this new setting.  As a matter of notation, for $x \in \Omega$, we write $x_r = \rho_r(x) \in \Omega_r$, and set \[  k_{p,q}(x; r) \ =_{\text{def}} \ k_{p,q}\left(\rB_r\right)\left(x_r \right). \]
 Thus $k_{p,q}(x;r)$ is the indicated entry of the finite  Betti table determined by the Boij-S\"oderberg coefficient vector $x_r \in \Omega_r$. 
 
 The computations of the previous section then lead to the following:
 \begin{theorem} \label{kpq.Thm.2.1} Fix a weight $q \in [1,n]$, and a sequence $\{ p_r \}$ with
 \[  c\   \le \ p_r \  \le \ 1-c \]
 for some small $c > 0$. Then for almost all $x \in \Omega:$ 
 \[
 \lim_{r \to \infty} \ \frac{ k_{p_r,q}(x; r)}{\binom{r-n}{p_r} \cdot \mu(r, p_r) } \ = \ 1. 
 \] 
 \end{theorem}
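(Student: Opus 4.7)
The plan is to upgrade the convergence-in-probability statement of Corollary \ref{Convergence.Corollary} to almost sure convergence by replacing the Chebyshev estimate used in its proof with a Hoeffding-type concentration bound, and then invoking the Borel–Cantelli lemma.

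By \eqref{kpq.from.Y}, the ratio in question equals $\sum_I \rY_{I,r,p_r}(x)/\mu(r,p_r)$. Remark \ref{Unif.Convergence.Remark} already gives that $\mathbf{E}(\sum_I \rY_{I,r,p_r})/\mu(r,p_r)\to 1$ uniformly in $p_r$, so it suffices to show that for every $\eps>0$ the event
\[
A_{r,\eps}\;=\;\Bigl\{\,\Bigl|\textstyle\sum_I \rY_{I,r,p_r}-\mathbf{E}\sum_I \rY_{I,r,p_r}\Bigr|>\eps\,\mu(r,p_r)\Bigr\}
\]
occurs for only finitely many $r$ outside a null set. The $\rY_{I,r,p_r}$ are independent (because the $\rX_I$ are) and each $\rY_{I,r,p_r}$ takes values in $[0,c_{I,r,p_r}]$ with $c_{I,r,p_r}\leq 1$, so Hoeffding's inequality yields
\[
\mathbf{P}(A_{r,\eps})\;\leq\;2\exp\!\left(-\frac{2\eps^2\mu(r,p_r)^2}{\sum_I c_{I,r,p_r}^2}\right).
\]

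Under the hypothesis $c\leq p_r/r\leq 1-c$ one has $\mu(r,p_r)\geq C_1 r^{n-1}$ for some $C_1>0$ independent of $p_r$, while $\sum_I c_{I,r,p_r}^2\leq \binom{r}{n-1}=O(r^{n-1})$, so the exponent is bounded below by $C_2\eps^2 r^{n-1}$ for a positive constant $C_2$ depending only on $c$, $n$, $q$. Since $n\geq 2$, the bound $2\exp(-C_2\eps^2 r^{n-1})$ is summable in $r$, and the Borel–Cantelli lemma gives $\mathbf{P}(\limsup_r A_{r,\eps})=0$. Intersecting the resulting full-measure sets over $\eps=1/k$ for $k\in\ZZ_{>0}$ produces a single full-measure set of $x\in\Omega$ along which the stated limit holds.

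The key technical point is this last concentration step: the Chebyshev bound used in the proof of Corollary \ref{Convergence.Corollary} decays only like $1/r^{n-1}$, which fails to be summable when $n=2$. The improvement comes from exploiting the \emph{boundedness} of the $\rY_{I,r,p_r}$ via Hoeffding (or equivalently Bernstein), which upgrades the polynomial tails to sub-Gaussian ones and hence provides summability for all $n\geq 2$. This is the only new ingredient beyond what already appears in Section \ref{Random.Diagrams.Section}.
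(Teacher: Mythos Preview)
Your argument is correct. The paper's own proof also reduces to showing $\sum_r \mathbf{P}(A_{r,\eps})<\infty$ and then applies Borel--Cantelli, but it obtains the required tail bounds differently: for $n\ge 3$ the Chebyshev estimate \eqref{Prob.Bound} already decays like $r^{-(n-1)}$ and is summable, while for $n=2$ the paper instead adapts the fourth-moment proof of the strong law (as in \cite[Theorem~2.3.5]{Durrett}) to squeeze out an extra power of $r$. Your route via Hoeffding is cleaner in that it handles all $n\ge 2$ uniformly with a single sub-Gaussian tail bound, at the cost of invoking a slightly less elementary concentration inequality; the paper's approach has the mild advantage of relying only on moment computations already implicit in \S\ref{Random.Diagrams.Section}, but pays for this by splitting into cases and leaving the $n=2$ details to the reader.
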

 \begin{proof}[Sketch of Proof]
 Assuming first that $n \ge 3$, one can simply modify slightly the first part of the proof of \cite[Theorem 5.1.2]{Chung}. In fact, keeping the notation of the previous section, put
 \[
 \rZ_r \ =_{\text{def}} \frac{ \sum_{I \in \binom{[r]}{n-1}} \big( \rY_{I, r, p_r} - \mathbf{E}(\rY_{I, r, p_r}) \big)}{\mu(r, p_r)}.
 \]  
 In view of equations \eqref{kpq.from.Y} and \eqref{Exp.Value.Sum.Y.Eqn}, it suffices to show that $\rZ_r \to 0$ almost everywhere on $\Omega$.   But when $n \ge 3$, it follows directly from \eqref{Prob.Bound} that given  $\delta > 0$:
 \[ \sum_r \, \PP \big( \left | \rZ_r \right |  >  \delta \big) 
\ < \ \infty,
 \]
 and as in \cite{Chung} this implies the desired convergence.  When $n = 2$ this argument does not work because  the right-hand side of \eqref{Prob.Bound} then has order $r^{-1}$. However in this case one can for example adapt in a similar fashion the proof (cf. \cite[Theorem 2.3.5]{Durrett}) of the strong law of large numbers for   independent random variables with finite second and fourth moments. We leave details to the reader. \end{proof}
 
 Stirling's formula now implies the following, which in particular proves Corollary \ref{Stirling.Cor.Intro} from the Introduction:
 \begin{corollary} \label{Stirling.Corollary}
 Fix $n$ and $q$, and let $\{ p_r \}$ be a sequence such that
 \[  p_r \ \to \ \frac{r}{2} \, + \, a \cdot \frac{\sqrt{r}}{2} \]
 for some $a \in \RR$. 
 Then for almost all $x \in \Omega : $
 \[
\left(
\frac{  (q-1)! \cdot (n-q)! \cdot \sqrt{2 \pi r}}{2^{r+2 -3n} \cdot  r^{n-1}}  
\right) \cdot k_{p_r, q}(x; r) \ \to \ e^{-a^2/2}.
 \]
 \end{corollary}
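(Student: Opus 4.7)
The plan is to combine Theorem~\ref{kpq.Thm.2.1}, which eliminates the randomness, with the local central limit theorem (i.e.\ Stirling's formula for a central binomial), which produces the Gaussian profile. For almost every $x \in \Omega$, Theorem~\ref{kpq.Thm.2.1} gives
\[
k_{p_r,q}(x;r) \ = \ \binom{r-n}{p_r}\cdot\mu(r,p_r)\cdot\bigl(1+o(1)\bigr)
\]
as $r\to\infty$; note that under the hypothesis $p_r/r\to 1/2$, so $p_r/r$ automatically lies in any interval $(c,1-c)$ with $c < 1/2$ once $r$ is large enough, and the boundedness hypothesis of that theorem holds. It therefore suffices to track the deterministic asymptotic of the product on the right.

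For the binomial factor, set $N=r-n$. The hypothesis $p_r=\tfrac{r}{2}+a\tfrac{\sqrt{r}}{2}+o(\sqrt{r})$ rewrites as
\[
\frac{2p_r - N}{\sqrt{N}} \ = \ \frac{n + a\sqrt{r} + o(\sqrt{r})}{\sqrt{r-n}} \ \lra \ a,
\]
so Stirling (applied to $\binom{N}{p_r}$ near $k = N/2$) yields
\[
\binom{r-n}{p_r} \ \sim \ 2^{r-n}\sqrt{\tfrac{2}{\pi(r-n)}}\,e^{-a^2/2} \ \sim \ 2^{r-n}\cdot\frac{2}{\sqrt{2\pi r}}\cdot e^{-a^2/2}.
\]
For the polynomial factor, substituting $p_r\sim r/2$ and $r-p_r-n\sim r/2$ into the formula \eqref{Def.mu} for $\mu(r,p)$ gives
\[
\mu(r,p_r) \ \sim \ \frac{1}{2^n}\cdot\frac{(r/2)^{n-1}}{(q-1)!\,(n-q)!} \ = \ \frac{r^{n-1}}{2^{2n-1}\,(q-1)!\,(n-q)!}.
\]

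Multiplying these two asymptotics and collecting powers of $2$ via $2^{r-n}\cdot 2\cdot 2^{-(2n-1)} = 2^{r+2-3n}$, one obtains
\[
\binom{r-n}{p_r}\cdot\mu(r,p_r) \ \sim \ \frac{2^{r+2-3n}\cdot r^{n-1}}{(q-1)!\,(n-q)!\,\sqrt{2\pi r}}\cdot e^{-a^2/2},
\]
which, combined with the first display, is exactly the assertion of the corollary after inverting the leading factor. The only delicate point is the binomial asymptotic: I need to verify that the constant shift by $n/2$ and the $o(\sqrt{r})$ slack in the hypothesis on $p_r$ are both absorbed into the $1+o(1)$ error of the Gaussian approximation to $\binom{N}{k}$ on the scale $|k-N/2|=O(\sqrt{N})$. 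This (mild) uniformity in Stirling's formula is the main obstacle; everything else is algebraic bookkeeping.
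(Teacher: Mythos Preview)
Your proof is correct and follows essentially the same route as the paper's: invoke Theorem~\ref{kpq.Thm.2.1} to reduce to the deterministic asymptotic of $\binom{r-n}{p_r}\mu(r,p_r)$, compute $\mu(r,p_r)\sim r^{n-1}/\bigl(2^{2n-1}(q-1)!(n-q)!\bigr)$, and apply Stirling to the binomial. The only cosmetic difference is that the paper first rewrites $\binom{r-n}{p_r}=\binom{r}{p_r}\cdot\prod_{j=0}^{n-1}\frac{r-p_r-j}{r-j}\sim 2^{-n}\binom{r}{p_r}$ and then applies the local central limit theorem to $\binom{r}{p_r}$, whereas you apply it directly to $\binom{r-n}{p_r}$ after checking $(2p_r-(r-n))/\sqrt{r-n}\to a$; the bookkeeping comes out the same either way.
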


\begin{remark} To simplify the formulas a bit,  we have chosen to break the symmetry between $(p,q)$ and $(r-n-p, n+1-q)$. One could obtain a similar formula for \[p_r \lra \frac{r-n}{2} + b \cdot \frac{ \sqrt{r-n}}{2}. \]\end{remark}
 
  \begin{proof}[Proof of Corollary \ref{Stirling.Corollary}]  First, note that
 \[  \binom{r-n}{p} \ = \ \binom{r}{p} \cdot \frac{ (r-p-n+1) \cdot \ldots \cdot (r-p-1)\cdot (r-p)}{(r-n+1) \cdot \ldots \cdot (r-1) \cdot r},\]
 and hence
 \[
 \lim_{r \to \infty} \frac{  \binom{r-n}{p_r}}{\binom{r}{p_r}} \ = \ \frac{1}{2^n}.
 \]
 Next, observe that
 \[ 
 \lim_{r \to \infty} \frac{ \mu(r, p_r)}{r^{n-1}} \ = \ \frac{1}{2^{2n-1} \cdot (q-1)! \cdot (n-q)!}. \tag{**}\]
 Finally, recall (cf. \cite[Theorem 3.1.2]{Durrett}) that 
 \[ \lim_{r \to \infty} \, \frac{ \sqrt{2 \pi r}}{2^{r+1}} \cdot \binom {r}{p_r} \ = \ e^{-a^2/2}. \tag{***} \]
 The Corollary follows upon putting these together with Theorem \ref{kpq.Thm.2.1}.
 \end{proof}
   
 Changing gears slightly, we conclude this section by working out the asymptotics for the Betti numbers of large degree embeddings of curves. In particular, we prove Proposition \ref{Betti.Asymptotics.Curves} from the Introduction. 
 
 Let $X$ be a smooth projective curve of genus $g$, and let $L_d$ be a divisor of degree $d \gg 0$ on $X$, so that 
 \[   r_d \ =_{\text{def}} \ \hh{0}{X}{L_d} - 1 \ = \ d - g. \] The first point is to compute $k_{p,1}(X; L_d)$. This appears   in \cite[Prop. 3.2]{Teixiodor}, but for the benefit of the reader we recall the derivation. 
In fact,  let $M_d$ be the rank $r_d$ vector bundle on $X$ defined as the kernel of the evaluation mapping
 \[    \text{ev} \, : \, \HH{0}{X}{L_d} \otimes \OO_X \lra L_d. \]
 Then, a well-known argument with Koszul cohomology (cf \cite[\S 3]{ASAV}) shows that
 \begin{align*}
 k_{p,1}(X; L_d) \ &= \ h^0(\Lambda^p M_d \otimes L_d) \, - \, \dim \Lambda^{p+1} H^0(L_d) \\
 k_{p-1,2}(X; L_d) \ &= \ h^1(\Lambda^p M_L \otimes L_d).\end{align*}
 Furthermore, 
 \[
 k_{p-1,2}(X, L_d) \ = \ 0 \ \ \text{for} \ p \le \ r_d - g
 \]
 thanks to a theorem of Green \cite[Theorem 4.a.1]{Kosz1}.
Thus for $p \le r_d - g$:
 \[
  k_{p,1}(X; L_d)  \ = \ \chi\big(X, \Lambda^p M_d \otimes L_d\big)  \, - \, \binom{r_d + 1}{p+1}. \tag{*}\]
 On the other hand, the Euler characteristic in (*) can be computed by Riemann-Roch. In fact, $\mu(M_d) = \frac{-d}{d-g}$, and hence
 \begin{align*}  
  \chi\left(\Lambda^p M_d \otimes L_d\right) \ &= \ \rk(\Lambda^p M_d) \cdot \Big( p \cdot \mu(M_d) + \mu(L_d) + 1-g \Big)\\
 &=  \binom{d-g}{p} \left( \frac{-pd}{d-g} + (d+1 - g) \right).
   \end{align*}
Writing
 \[
 \binom{r_d + 1}{p+1} \ = \ \binom{d-g}{p} \cdot \frac{d + 1 - g}{p+1},
 \]
 we find finally that
 \[
k_{p,1}(X; L_d)  \ = \ \binom{d-g}{p} \left( \frac{-pd}{d-g} + (d+1 - g) - \frac{d+1-g}{p+1} \right) 
 \]
provided that $p \le r_d - g$.  Proposition \ref{Betti.Asymptotics.Curves} from the Introduction then follows as in the proof of Corollary \ref{Stirling.Corollary}
 by an application of Stirling's formula.  
 
 \begin{remark} \label{Curve.Variability.Remark}
 When $p > r_d - g = d -2g$, the Betti number $k_{p,1}(X; L_d)$ depends on $X$ and $L_d$ in a manner that is not yet completely understood. On the other hand, the computation of syzygies as Koszul cohomology groups shows that $K_{p,1}(X; L_d)$ is a sub-quotient of $\Lambda^{p} H^0(L_d) \otimes H^0(L_d)$, and therefore
 \[
 k_{p,1}(X; L_d) \ \le \ \binom{d + 1 - g}{p} \cdot (d + 1- g).
 \]
 For $p > d - 2g$ this becomes a polynomial upper bound that is much smaller than the value of $k_{p,1}$ for $p \approx \frac{d-g}{2}$. Thus as far as asymptotics are concerned, the $K_{p,1}$ with $p > d - 2g$ are essentially negligible.    Of course in the statement of Proposition  \ref{Betti.Asymptotics.Curves} these ambiguities don't even enter the picture.  \end{remark}
 
   \section{Complements and Open Questions} \label{Complements.Section}
    
   In this section we establish some related results, and propose some open problems for further research.

   \subsection{Weight Functions} \label{Weight.Function.Subsection}
   Here we discuss the possibility of generalizing our results by taking non-uniform probability measures on $\Omega_r$. We will assume for simplicity that $n = 2$: in this case pure diagrams are labeled by a single integer $i \in [1, r] = \binom{[r]}{1}$. 
   
Fix  a function 
   \[  h : [0,1] \lra [0,1]. \]
   Then we can use $h$ to define the \textit{weighted} random Betti table
   \[  \rB_r^h \ =_{\text{def}} \sum_{i=1}^r  h(\tfrac{i}{r})\cdot \rX_i \cdot \pi(r, i), \]
   where $\rX_i$ $(1 \le i \le r)$ as above are independent random variables uniformly distributed on $[0,1]$. Equivalently, we are weighting the pure diagram $\pi(r,i)$ with a random variable uniformly distributed on the interval $[0, h(\tfrac{i}{r})]$.        This procedure is illustrated in Figure 4, which  shows a random Betti table with $r = 500$ weighted according to the somewhat whimsical choice 
   \[ h(t) \ =\ \sin^2(2 \pi( t-.35)).\] The plot on the left displays  random coefficients $x_i$  ($1 \le i \le 500$) chosen uniformly in the interval $[0, h(i/500)]$. The resulting Betti numbers $k_{p,1}$ are plotted on the right.
   
\begin{figure} \label{Random.Diagram}
 \hskip -10pt     \includegraphics[scale = .85]{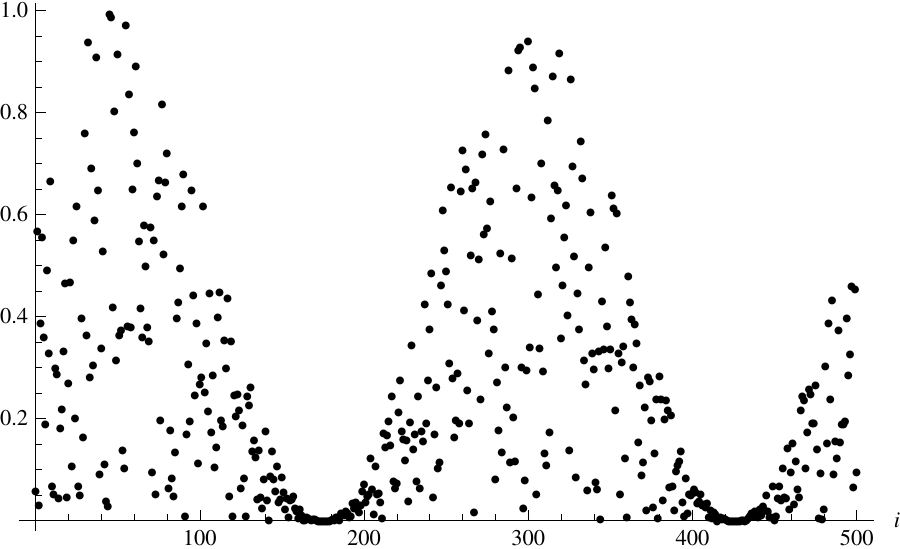} \qquad  \includegraphics[scale = .85]{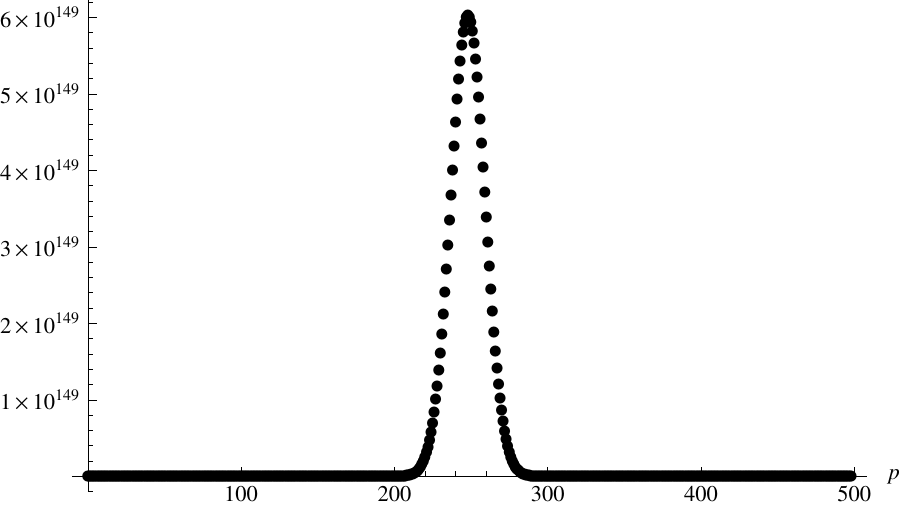}
   \vskip -10pt
   \caption{Weighted random Betti table with $h(t) = \sin^2(2 \pi (t-.35))$ and $r = 500$}
   \vskip 10pt
  \end{figure}
   
   Now for  arbitrary $h$ one can't expect the rows of $\rB_r^h(x)$ to be normally distributed for most $x$. For example  if $\text{supp}(h) \subset [0, \frac{1}{2}]$, then
   \[  k_{p,1}\big(\rB_r^h(x)\big) \ = \ 0 \]
   for every $x$ whenever $p > \frac{r}{2}$.     However if $h$ is smooth, and if one rules out the sort of problem just illustrated, then the qualitative results established in the previous section  -- which correspond to the case $h(t) \equiv 1$ -- do remain valid.
    \begin{theorem} \label{Weighted.Table.Thm}
    Assume that $h : [0,1] \lra [0,1]$ is smooth, and that it is not identically zero on $[0, \tfrac{1}{2}]$ or on $[\tfrac{1}{2}, 1]$. There exist functions $F_1(r), F_2(r)$ with the property that if  $p_r$ is a  sequence with 
    \[  p_r \to \frac{r}{2} + a \cdot \frac{\sqrt{r}}{2} \]
    for some $a \in \RR$, then 
        \[  F_1(r) \cdot k_{p_r,1}\big(\rB^h_r(x)\big) \to e^{-a^2/2}\ \ ,  \ \ F_2(r) \cdot k_{p_r,2}\big(\rB^h_r(x)
        \big) \to e^{-a^2/2} \]
    for almost all $x \in \Omega$.
    \end{theorem}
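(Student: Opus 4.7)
The plan is to run the argument of Sections 1 and 2 with the weight $h(i/r)$ carried along at each step and to identify $F_1, F_2$ by isolating where $h$ enters. I will describe the case $q=1$; the case $q=2$ is entirely analogous, with the roles of $[0, \tfrac12]$ and $[\tfrac12, 1]$ swapped, so the non-vanishing hypothesis on $[0, \tfrac12]$ is what guarantees the analog of the integral below is positive.

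First I will compute the expectation. Applying Lemma \ref{pure.diagram.II} with $n=2$, $q=1$ gives $k_{p,1}(\pi(r,\{i\})) = \binom{r-2}{p}(i-p-1)_+/(r-p-1)$, and hence
\[ \mathbf{E}\!\left[k_{p,1}(\rB_r^h)\right] \ = \ \binom{r-2}{p} \cdot \frac{1}{2(r-p-1)} \sum_{i=p+2}^{r} h(i/r)(i-p-1). \]
Since $h$ is smooth, the standard Riemann-sum (or Euler--Maclaurin) estimate converts the sum into $r^2 \int_{p/r}^{1} h(t)(t - p/r)\,dt + O(r)$. For $p = p_r \to r/2 + a\sqrt{r}/2$, this integral converges to $c_h := \int_{1/2}^{1} h(t)(t - \tfrac12)\,dt$, which is strictly positive because $h$ is smooth, nonnegative, and not identically zero on $[\tfrac12, 1]$. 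Thus
\[ \mathbf{E}\!\left[k_{p_r,1}(\rB_r^h)\right] \ = \ \binom{r-2}{p_r} \cdot r \cdot c_h \cdot (1 + o(1)), \]
and combining with the Stirling asymptotic of $\binom{r-2}{p_r}$ used in Corollary \ref{Stirling.Corollary} produces $F_1(r)$ as a constant multiple (depending on $c_h$) of the function $F(r)$ appearing there.

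Next I will establish almost-everywhere convergence. Setting $\rY_{i,r,p}^h = h(i/r)\rX_i (i-p-1)_+/(r-p-1)$, the bound $|h| \le 1$ ensures that every moment of $\rY_{i,r,p}^h$ is dominated by the corresponding moment of the unweighted $\rY_{i,r,p}$. Chebyshev's inequality therefore delivers exactly the bound $\PP(|\rZ_r^h| > \delta) \le C/(\delta^2 r)$ of Corollary \ref{Convergence.Corollary}; as already noted in the sketch of Theorem \ref{kpq.Thm.2.1}, this is not summable when $n=2$, so one must run the fourth-moment adaptation of the strong law indicated there. The weight $h(i/r)$ changes no step of that argument.

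The principal technical issue will be keeping the Riemann-sum approximation uniform in $p_r$: because $p_r$ wanders by $O(\sqrt{r})$ around $r/2$, I must show $\int_{p_r/r}^1 h(t)(t - p_r/r)\,dt = c_h + O(1/\sqrt{r})$ uniformly, which I expect to handle by Taylor expanding this integral at $t=\tfrac12$ and invoking smoothness of $h$. Once uniformity is in hand, the entire argument of Section \ref{Betti.Asymptotics.Section} transports verbatim to the weighted setting, and the hypothesis allowing $h$ to vanish on subintervals of $[\tfrac12, 1]$ (but not identically there) is exactly what is needed to keep $c_h > 0$.
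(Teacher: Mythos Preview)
Your proposal is correct and follows essentially the same route as the paper's proof: compute the expectation via Lemma \ref{pure.diagram.II}, recognize the sum as a Riemann sum for $r\cdot H(r,p)$ with $H(r,p)$ built from $\int_{p/r}^1 h(t)(t-p/r)\,dt$, observe that this integral tends to the positive constant $c_h=\int_{1/2}^1 h(t)(t-\tfrac12)\,dt$ along the given sequence $p_r$, and then invoke the fourth-moment strong law from Theorem \ref{kpq.Thm.2.1} together with Stirling exactly as in Corollary \ref{Stirling.Corollary}. The only cosmetic difference is that the paper packages the normalizing factor as $H(r,p)=\frac{1}{2(1-(p+1)/r)}\int_{(p+1)/r}^1 h(t)(t-(p+1)/r)\,dt$ and writes $H(r,p_r)\to H(r,\tfrac{r}{2})$, whereas you pass directly to the limiting constant $c_h$; these are equivalent since $H(r,\tfrac{r}{2})\to c_h$.
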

    \noi This suggests that the picture established in  the previous section is actually quite robust. 
    
    \begin{remark}
    Alternatively, one could consider the ``deterministic" Betti diagram
  \[    \sum_{i =1}^r \, h(\tfrac{i}{r}) \cdot \pi(r, i). \]
An evident analogue of the theorem remains true in this setting. See Problem \ref{NIce.BS.Function.Question} for  the potential interest in statements of this sort.
    \end{remark}
    
    \begin{remark} 
  When $n \ge 3$ one expects that a similar result holds  if one uses a smooth function \[ h: [0,1]^{n-1} \lra [0,1]\] to weight $\rX_I$ ($I \in \binom{[r]}{n-1}$) by $h(\tfrac{i_1}{r}, \ldots, \tfrac{i_{n-1}}{r})$. 
    \end{remark}
    
    \begin{proof}[Sketch of Proof of Theorem \ref{Weighted.Table.Thm}]     We will assume $q = 1$, the case $q = 2$ being similar.
   Set
   \begin{equation}\label{eqn:H integral}
H(r,p)\ = \ \frac{ \int_{\frac{p+1}{r}}^{1} \, h(t) \cdot (t- \frac{p+1}{r}) \ d{t}}{2(1-\frac{p+1}{r})}.
\end{equation}
Observe that $H(r, \tfrac{r}{2}) \ne 0$  for $r \gg 0$ thanks to the fact that $h$ is smooth and does not vanish identically on $[\tfrac{1}{2},1]$. Furthermore,  if $p_r \to \frac{r}{2} + a \cdot \frac{\sqrt{r}}{2} $ then $H(r, p_r) \lra H(r, \tfrac{r}{2})$. Now, as in the proof of Theorem \ref{Expected.Value.Thm}: 
\begin{align*}
\mathbf{e}_{p,1}(\rB^h_{r})\ &=\ \binom{r-2}{p}\cdot \frac{\sum_{i=1}^{r} h(\tfrac{i}{r})(i-p-1)_+}{2(r-p-1)}\\
&=\ \binom{r-2}{p}\cdot \frac{\sum_{i=p+2}^{r} h(\tfrac{i}{r})(i-p-1)}{2(r-p-1)}\\
&=\ \binom{r-2}{p}\cdot \left(\frac{1}{2\left(1-\tfrac{p+1}{r}\right)}\right) \cdot \left(\sum_{i=p+2}^{r} h(\tfrac{i}{r})(\tfrac{i}{r}-\tfrac{p+1}{r})\right) .
\end{align*}
But the product of the two righthand terms is a Riemann sum for $r \cdot H(r,p)$, and  one checks  that with $p_r$ as above one in fact has
\[
\frac{\mathbf{e}_{p_r,1}(\rB^h_{r})}{\binom{r-2}{p_r} \cdot r \cdot H(r,\tfrac{r}{2})} \, \to \, 1 \quad \text{ as } \quad r\to \infty.
\]
  On the other hand, as in Theorem \ref{kpq.Thm.2.1}, 
\[ \frac{k_{p_r,1}\big( \rB^h_r(x)\big)}{\mathbf{e}_{p_r,1}(\rB^h_{r})}\,  \to\, 1\]
for almost all $x \in \Omega$. 
One then concludes with an application of Stirling's formula as in the proof of Corollary \ref{Stirling.Corollary}. 
   \end{proof}

   \subsection{Some generic $2$-regular modules} 
   \label{Generic.Mxs.Subsection}
   As noted in the Introduction, it is interesting to compare the ``random" Betti tables studied here with those  arising from modules that are generic in a more traditional sense.  Once again we will assume that $n = 2$, where the algebraic situation  is particularly clear.
   
   Consider then a finite-length graded module $M$  over the polynomial ring $R=k[x_1, \ldots, x_{r-2}]$,   with
  \[    k_{p,q}(M) \ = \ 0 \ \ \text{for } \, q \ne 1\, , \, 2. \]
 Such a module is $2$-regular, and  so is given by two vector spaces $M_1$, $M_2$, say of dimensions $m_1$, $m_2$,  together with a mapping:
 \[   M_1 \otimes R_1 \lra M_2 \tag{*}  \]
 determining the $R$-module structure. After choosing bases, (*)  is in turn equivalent to specifying an $m_2 \times m_1$ matrix $\phi$ of linear forms. We will write $M_\phi$ for the module corresponding to a matrix $\phi$.    Note that the possible choices of $\phi$ are parametrized by an irreducible variety.

Returning for a moment to the setting of \S \ref{Random.Diagrams.Section}, fix now a Boij-S\"oderberg coefficient vector  
 \[  x = (x_1, \ldots, x_r) \ \in \ \Omega_r \, = \, [0,1]^r, \]
 and consider the corresponding Betti table. This is a sum of $r$ pure tables, and the expected value of each $x_i$ is $\tfrac{1}{2}$. Thus the expected formal multiplicity of the table in question is $\tfrac{r}{2}$, and the expected formal Hilbert function has values $\tfrac{r}{4}$ in degrees $1$ and $2$, and  $0$ for all other degrees.  This leads us to consider modules $M$   with
 \[   \dim M_1 \, = \, \dim M_2 \, = \, s  \]
for some integer $s$, which  are described by an $s \times s$ matrix $\phi$ of linear forms.  
   
 We prove:
  \begin{proposition} \label{Table.Generic.Mx.Prop}
  There are arbitrarily large integers $s$ with the property that  if 
  $\phi$ is a general $s\times s$ matrix of linear forms, then the Betti table of $M_{\phi}$ is a sum of pure tables of type
\[  \pi\left( r,  \lfloor \tfrac{r+1}{2}\rfloor\right) \ \ \text{and} \ \ \pi\left( r,  \lceil \tfrac{r+1}{2}\rceil\right).  \]
In particular, if $r$ is odd then $M_\phi$ has a pure resolution.
  \end{proposition}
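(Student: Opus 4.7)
Proof proposal.

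Set $i = \lfloor (r+1)/2 \rfloor$ and $j = \lceil (r+1)/2 \rceil$, and put $T_0 = \pi(r,i)+\pi(r,j)$ (which equals $2\pi(r,(r+1)/2)$ when $r$ is odd). By Lemma~\ref{pure.diagram.II} the entries of $T_0$ are positive rational numbers; let $D$ denote their common denominator. For any positive integer $N$ take
\[
s \ = \ N \cdot D \cdot k_{0,1}(T_0),
\]
which yields arbitrarily large values of $s$. The rescaled diagram $T = (s/k_{0,1}(T_0)) \cdot T_0$ then has integer entries and satisfies $k_{0,1}(T) = s$. A direct computation of the Hilbert series $(1-t)^{r-2} H_M(t) = \sum_{p,q}(-1)^p k_{p,q}(T)\, t^{p+q}$ shows that any module with Betti diagram $T$ has $\dim M_1 = \dim M_2 = s$ and $\dim M_t = 0$ for $t \notin \{1,2\}$.

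By the Eisenbud-Schreyer realization theorem applied to the nonnegative rational combination $T$ of pure diagrams, there exists a graded $R$-module $M$ with Betti diagram exactly $T$. Since $k_{0,q}(T) = 0$ for $q \ne 1$ the module is generated in degree $1$, and since $k_{p,q}(T) = 0$ for $q \ge 3$ it is $2$-regular, so its structure is encoded by the multiplication $M_1 \otimes R_1 \to M_2$ and one can write $M = M_{\phi_0}$ for a specific $s \times s$ matrix $\phi_0$ of linear forms. Upper semicontinuity of Betti numbers in the affine space of $s \times s$ matrices of linear forms then yields a Zariski open neighborhood $U$ of $\phi_0$ on which $k_{p,q}(M_\phi) \le k_{p,q}(T)$ for every $(p,q)$. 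In particular, the vanishings $k_{p,2}(T) = 0$ for $p \le i-2$ and $k_{p,1}(T) = 0$ for $p \ge j$ transfer to every $M_\phi$ with $\phi \in U$.

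For every pure diagram $\pi(r,k)$ with $k \in [1, r]$, the support of its nonzero entries is $\{0,\ldots, k-2\}$ in row $1$ and $\{k-1,\ldots,r-2\}$ in row $2$. An easy check shows that this support fits inside the support of $T$ only for $k \in \{i, j\}$. Consequently, the Boij-S\"oderberg decomposition of the Betti table of $M_\phi$ for $\phi \in U$ has the form $\alpha\, \pi(r,i) + \beta\, \pi(r,j)$. The Hilbert function constraints $\dim M_1 = \dim M_2 = s$ (which hold for every surjective $\phi$, a generic condition on $U$) give two linear equations in $(\alpha, \beta)$ with a unique solution; matching against the known diagram at $\phi_0$ yields $\alpha = \beta = s/k_{0,1}(T_0)$, so the Betti table of $M_\phi$ is exactly $T$ on $U$. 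When $r$ is odd, $T$ is a multiple of the single pure diagram $\pi(r,(r+1)/2)$, giving the assertion that $M_\phi$ has a pure resolution.

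The step I expect to be the main obstacle is the transition from the abstract Eisenbud-Schreyer existence of a module $M$ with Betti diagram $T$ to the concrete realization as $M_{\phi_0}$. One must verify that the shape of $T$ really forces all minimal generators to sit in degree $1$ with exactly $s$ of them (so that no extraneous generators appear to spoil the identification with an $s\times s$ linear presentation), and more subtly that the generic surjectivity locus of $\phi$ meets $U$. Both points follow from the vanishings read off from $T$, but they must be checked uniformly in $N$ so as to obtain infinitely many admissible $s$.
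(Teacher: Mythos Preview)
Your argument is essentially correct and arrives at the same conclusion as the paper, but via a different construction of the ``witness'' module $M_{\phi_0}$.

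\textbf{One genuine imprecision.} The step where you invoke an ``Eisenbud--Schreyer realization theorem'' to produce a module with Betti table \emph{exactly} $T$ is not quite right as stated. Boij--S\"oderberg theory guarantees that pure modules $\Pi_i,\Pi_j$ exist with Betti tables $c_i\,\pi(r,i)$ and $c_j\,\pi(r,j)$ for certain positive integers $c_i,c_j$, and direct sums then realize any table of the form $a\,c_i\,\pi(r,i)+b\,c_j\,\pi(r,j)$. But it is not known in this generality that every lattice point on the cone is a Betti table, so taking $D$ to be merely the common denominator of the entries of $T_0$ does not ensure that $ND\cdot\pi(r,i)$ and $ND\cdot\pi(r,j)$ are individually realizable. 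The fix is painless: restrict $N$ to multiples of $\operatorname{lcm}(c_i,c_j)$ (or simply build $D$ from the $c_k$ rather than from denominators). You still obtain arbitrarily large $s$, which is all the statement requires. The ``main obstacle'' you flag at the end---passing from the abstract $M$ to some $M_{\phi_0}$---is by contrast not an issue: since $k_{0,q}(T)=0$ for $q\neq 1$ and $k_{0,1}(T)=s$, the module is generated by exactly $s$ elements in degree~$1$, and the Hilbert function forces $\dim M_1=\dim M_2=s$, so $M$ is automatically presented by an $s\times s$ linear matrix.

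\textbf{Comparison with the paper.} The paper proceeds more concretely: it takes $s=r-1$ and writes down the witness module directly as (a twist of) the direct sum of two Buchsbaum--Eisenbud complexes $\mathcal C^{\lfloor (r-3)/2\rfloor}\oplus\mathcal C^{\lceil (r-3)/2\rceil}$ associated to a general map $R^{r-1}(-1)\to R^2$; these complexes are classically known to be pure of the required types. Direct sums then handle all $s$ divisible by $r-1$. Your approach replaces this explicit construction by the abstract existence of pure resolutions, which is cleaner but imports more machinery. On the other hand, you spell out carefully the passage from ``one example'' to ``general $\phi$''---namely, that semicontinuity confines the support of the generic Betti table, and then the fixed Hilbert function (equivalently, the two linear conditions $\dim M_1=\dim M_2=s$) pins down the Boij--S\"oderberg coefficients uniquely because $j-i\le 1$ forces $\pi(r,k)$ to fit only for $k\in\{i,j\}$. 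The paper compresses this entire step into the phrase ``by semicontinuity,'' so your version is in that respect more complete.
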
 
\noi We note that a conjecture of Eisenbud--Fl\o ystad--Weyman \cite[Conjecture 6.1]{EFG} implies that the statement should hold for all sufficiently large $s$. In any event,   the Proposition shows that genericity in the module-theoretic sense can lead to completely different behavior than that which occurs for the random tables considered above.  Observe that this does not contradict Conjecture \ref{Betti.Asymptotics.Conj}: in fact,   the results of \cite{ASAV} imply  that the resolutions arising in the geometric setting are very far from pure.
   
   \begin{proof}[Proof of Proposition \ref{Table.Generic.Mx.Prop}]
   For any given $s$, it suffices (by the semicontinuity of Betti
numbers) to produce one example where the statement holds.  We will
prove the theorem when $s=r-1$.  Then by taking direct sums, this will
imply the statement for any $s$ that is a multiple of $r-1$.  We
henceforth assume that $s=r-1$.

Using the notation of \cite[A2.6]{Eisenbud}, we consider the direct sum of
free resolutions 
\[
\left(\mathcal C^{\lfloor \frac{r-3}{2}
\rfloor}\oplus \mathcal C^{\lceil \frac{r-3}{2} \rceil}\right)
\otimes_R R(-1)\] derived from a general map $R^{r-1}(-1)\to R^2$.  The
module $M$ resolved by this free complex has regularity $2$ and
satisfies $\dim M_1=\dim M_2=r-1$.  Since the Betti table of the
complex $\mathcal C^{i}\otimes_R R(-1)$ equals $(r-1)\cdot
\pi(r,{i+2})$, the statement follows immediately.
   \end{proof}

   \subsection{Open Questions}
   We conclude by proposing a few problems.
   
   First, the reader will note that all our arguments are purely numerical in nature -- they don't give an a priori  sense why one would expect   to see normal distibution of Betti numbers.
   \begin{problem}
  Find a probabilistic (or other) model that  explains the behavior that we have established for random Betti tables.
   \end{problem}

Returning to the geometric questions motivating the present work,  consider as in the Introduction a smooth projective variety 
\[
X \ \subseteq  \  \PP^{r_d}
\]  
of dimension $n$ embedded by $L_d = dA + P$.    Assuming for instance that $\HH{i}{X}{\OO_X} = 0$ for $0 < i < n$, so that $X$ is arithmetically Cohen-Macaulay when $d \gg 0$, one can consider the Boij-S\"oderberg decomposition of the resolution of $X$.    We pose the somewhat vague
\begin{problem} \label{NIce.BS.Function.Question}
Can one find a ``nice" function $h$    that governs this decomposition as in \S\ref{Weight.Function.Subsection}? If so, what are its properties?
\end{problem}
\noi  The problem is most immediately meaningful in the case $n =2$, in which case  Boij-S\"oderberg coefficients $x_i$ are well-defined for each $i \in [1, r_d -2]$. The question then becomes whether they interpolate a fixed smooth or continuous function $h$.\footnote{It is   possible that the scaling  chosen   above is not the appropriate one to use.} At the moment an affirmative solution seems out of reach, since a good answer to the Problem would presumably imply Conjecture \ref{Betti.Asymptotics.Conj}. On the other hand, the question is philosophically   in keeping with recent work on linear series (eg \cite[Chapter 2.2.C]{PAG}, \cite{ELMNP}, \cite{CBALS}) and Hilbert series (eg \cite{BeckStap}, \cite{McCabeSmith}), where it has become apparent that one can often define asymptotic invariants that behave surprisingly well. The asymptotic Boij-S\"oderberg coefficients of large degree embeddings of curves have been analyzed by the second author \cite{Erman}.
   
   Finally, we expect that Conjecture \ref{Betti.Asymptotics.Conj}, if true, is esentially an algebraic fact.
   \begin{problem}
Find  a purely algebraic statement that implies, or runs parallel to, Conjecture \ref{Betti.Asymptotics.Conj}.
   \end{problem}

 %
 %
 %
 %

 \end{document}